\documentclass[11pt, a4paper,oneside, reqno]{amsart}
\usepackage[english]{babel}
\usepackage{amsmath, amsthm, amsfonts, mathrsfs, amssymb, amscd}
\usepackage{bm}
\usepackage{mathtools}
\mathtoolsset{centercolon}
\usepackage{mathabx}
\usepackage{accents}

\usepackage[toc]{appendix}

\usepackage[shortlabels]{enumitem}
\usepackage[colorlinks, citecolor = blue, urlcolor={red}]{hyperref}
\usepackage{geometry}
\usepackage{bbm}

\allowdisplaybreaks

\newtheorem{theorem}{Theorem}

\newtheorem{lemma}[theorem]{Lemma}

\theoremstyle{remark} 
\newtheorem{remark}[theorem]{Remark}

\theoremstyle{definition} 

\newtheorem{assumption}[theorem]{Assumption}

\numberwithin{theorem}{section}
\numberwithin{equation}{section}

\def\R{{\mathbb R}}

\renewcommand{\P}{{\mathbb P}}

\newcommand{\om}{\omega}
\newcommand{\Om}{\Omega}

\newcommand{\abs}[1]{\vert #1 \vert}

\newcommand{\norm}[1]{\Vert #1 \Vert}
\newcommand{\inner}[2]{\langle #1,\, #2 \rangle}

\newcommand{\BS}{{\rm BS}}

\newcommand{\T}{\mathbb{T}}

\newcommand{\mc}{\mathcal}

\newcommand{\Ls}{\mathbb{L}}
\newcommand{\Hs}{\mathbb{H}}
\newcommand{\Hinf}{H^{\infty}}
\newcommand{\RR}{\mathbb{R}}

\newcommand{\NN}{\mathbb{N}}

\newcommand{\BB}{\mathbb{B}}

\newcommand{\half}{\frac{1}{2}}

\newcommand{\RRd}{\RR^d}

\renewcommand{\d}{\partial}
\newcommand{\del}{\Delta}
\newcommand{\grad}{\nabla}

\newcommand{\eps}{\varepsilon}

\newcommand{\gam}{\gamma}

\newcommand{\loc}{{\rm loc}}
\renewcommand{\tilde}[1]{\widetilde{#1}}
\renewcommand{\hat}[1]{\widehat{#1}}

\newcommand{\dd}{\hspace{2pt}\mathrm{d}}

\DeclareMathOperator{\ind}{\mathbf{1}}

\DeclareMathOperator{\SMR}{SMR}

\begin{document}

\author[Goodair]{Daniel Goodair}
\address{Daniel Goodair \hfill\break\indent École Polytechnique Fédérale de Lausanne \hfill\break\indent Bâtiment MA \hfill\break\indent
Station 8\hfill\break\indent
1015 Lausanne, Switzerland }
\email{daniel.goodair@epfl.ch}

\author[Roodenburg]{Floris Roodenburg}
\address{Floris Roodenburg \hfill\break\indent
Delft Institute of Applied Mathematics \hfill\break\indent
Delft University of Technology \hfill\break\indent
P.O. Box 5031 \hfill\break\indent
2600 GA Delft, The Netherlands}
\email{f.b.roodenburg@tudelft.nl}

\title{GLOBAL WELL-POSEDNESS FOR THE 2D STOCHASTIC
HYPOVISCOUS NAVIER-STOKES EQUATIONS}


\subjclass[2020]{35Q35, 60H15, 76D03, 76M35}
\keywords{2D stochastic hypoviscous Navier--Stokes equations, stochastic maximal regularity}

\thanks{The second author is supported by the VICI grant VI.C.212.027 of the Dutch Research Council (NWO)}

\begin{abstract}
We study stochastic hypoviscous Navier--Stokes equations on the torus with dissipation $(-\Delta)^\gamma$ for $\gam\in (\frac 12,1]$ and multiplicative noise. Relying on stochastic maximal regularity results for the linear equation, we establish local well-posedness for this problem in arbitrary dimensions with initial data in a range of scaling-critical Besov spaces. With the aid of $L^q$-energy estimates for the vorticity equation, we also prove global well-posedness of the stochastic hypoviscous Navier--Stokes equation in 2D with linear multiplicative noise.
\end{abstract}

\maketitle
\setcounter{tocdepth}{1}
\tableofcontents

\section{Introduction}
We consider the following incompressible stochastic fractional Navier--Stokes equations on the $d$-dimensional torus $\T^d$ for $d\geq 2$:
\begin{equation}\label{eq:FSNSE_intro}
\left\{
\begin{aligned}
    &\dd u + \big[(-\del)^\gam u+(u\cdot\grad) u+ \grad \pi\big]\dd t  =\sum_{n\geq 1}\big[g_n(\cdot, u)-\grad \pi_n\big]\dd W^n_t&\text{ on }& \T^d,\\
    &\grad \cdot u = 0&\text{ on }& \T^d,\\
    &u|_{t=0} = u_0,
\end{aligned}
    \right.
\end{equation}
where $u =(u^k)_{k=1}^d:[0,\infty)\times \Omega\times \T^d\to \R^d$ and $\pi, \pi_n: [0,\infty)\times \Omega \times \T^d\to \R$ denote the unknown velocity field and pressures, respectively. In addition, $g=(g_n)_{n\geq 1}$ and $u_0$ are the stochastic forcing term and initial value, respectively, for which we specify the precise conditions in Section \ref{sec:mainresuls}. Furthermore,  $(W^n_t: t\geq 0)_{n\geq 1}$ is a given sequence of standard independent Brownian motions and
\begin{equation}\label{eq:def_convection}
    (u\cdot \grad) u =\big(\sum_{j=1}^d u^j \d_j u^k\big)_{k=1}^d
\end{equation}
denotes the convection term.

Throughout, we assume $\gam\in (\frac 12, 1]$.  The case $\gam=1$ is the classical stochastic Navier--Stokes equation, whereas
$\frac12<\gam<1$ and $\gam>1$ are referred to as the hypoviscous and hyperviscous regimes, respectively. The hypoviscous case has weaker dissipation than the classical equation, while in the hyperviscous case the additional smoothing helps in the analysis of the equation. We will only deal here with the more difficult hypoviscous regime via a functional-analytic framework based on critical spaces and stochastic maximal regularity for the differential operator $(-\del)^\gam$.

For deterministic parabolic equations the theory of critical spaces has been developed systematically in \cite{PSW18} and is applied to the deterministic Navier--Stokes equations in \cite{PW18}. For stochastic equations, the corresponding critical-space theory based on stochastic maximal regularity was developed in \cite{AV22,AV22_part2}, see also the survey \cite{AV25_survey}. These ideas were
used in \cite{AV24_SNSE} to study stochastic Navier--Stokes equations on the torus. In the present paper, we use stochastic maximal regularity for the nonlocal operator $(-\Delta)^\gam$ and prove local well-posedness for \eqref{eq:FSNSE_intro} on $\T^d$ with arbitrary $\gam\in (\frac 12, 1]$ and initial data in critical Besov spaces, see Section \ref{subsec:scaling} for a scaling analysis and Theorem \ref{thm:localWP} for our local well-posedness result. Based on $L^q$-energy estimates for the vorticity equation, we also prove global well-posedness of \eqref{eq:FSNSE_intro} on $\T^2$ with arbitrary $\gam\in(\frac 12, 1]$, see Theorem \ref{thm:globalWPvelocity}.
We refer to \cite{De16, Wu04,YWS22} for several related works about the (stochastic) fractional Navier--Stokes equations. 

\subsection{Scaling and criticality}\label{subsec:scaling}
Before stating our main results in the next section, we carry out a scaling argument for the stochastic fractional Navier--Stokes equations. The scaling analysis provides insight into the minimal smoothness assumptions for local well-posedness in Theorem \ref{thm:localWP}. 

On $\RR^d$ it is straightforward to check that (smooth) solutions to \eqref{eq:FSNSE_intro} with $g\equiv 0$ (and ignoring the pressure) are invariant under the scaling
\begin{equation*}
    u(t,x) \mapsto u_\lambda(t,x):= \lambda^{1-\frac{1}{2\gam}}u(\lambda t, \lambda^{\frac{1}{2\gam}}x), \quad (t,x)\in \RR_+\times \RR^d\, \text{ and }\, \lambda>0.
\end{equation*}
Note that for $\gam=1$ this coincides with the scaling for the Navier--Stokes equations, see for instance \cite{AV24_SNSE, PW18}. In the literature (see, e.g., \cite{AV25_survey, LR16, PW18, Tr13}), a Banach space is called \emph{critical} if functions in this Banach space are invariant under the following map for the initial data:
\begin{equation*}
   u_0\mapsto u_{0,\lambda} := \lambda^{1-\frac{1}{2\gam}}u_0(\lambda^{\frac{1}{2\gam}}\cdot).
\end{equation*}
The critical spaces within the scale of Lebesgue and (homogeneous) Besov spaces are
\begin{equation*}
    \dot{B}_{q,p}^{\frac{d}{q}+1-2\gam}(\RRd;\RRd)\quad \text{ and }\quad L^{\frac{d}{2\gam-1}}(\RRd;\RRd).
\end{equation*}
Indeed, these spaces satisfy the scaling
\begin{align*}
    \|u_{0,\lambda}\|_{\dot{B}_{q,p}^{\frac{d}{q}+1-2\gam}(\RRd;\RRd)}&\eqsim\lambda^{1-\frac{1}{2\gam}}(\lambda^{\frac{1}{2\gam}})^{(\frac{d}{q}+1-2\gam)-\frac{d}{q}}\|u_0\|_{\dot{B}_{q,p}^{\frac{d}{q}+1-2\gam}(\RRd;\RRd)} = \|u_0\|_{\dot{B}_{q,p}^{\frac{d}{q}+1-2\gam}(\RRd;\RRd)},\\
     \|u_{0,\lambda}\|_{L^{\frac{d}{2\gam-1}}(\RRd;\RRd)}&\eqsim \lambda^{1-\frac{1}{2\gam}} (\lambda^{\frac{1}{2\gam}})^{-d (\frac{d}{2\gam-1})^{-1}}\|u_0\|_{L^{\frac{d}{2\gam-1}}(\RRd;\RRd)} = \|u_0\|_{L^{\frac{d}{2\gam-1}}(\RRd;\RRd)},
\end{align*}
where the implicit constant is independent of $\lambda>0$. Again, we note that for $\gam=1$ these spaces coincide with the well-known critical spaces for the Navier--Stokes equations: $\dot{B}_{q,p}^{d/q-1}(\RRd;\RRd)$ and $L^d(\RRd;\RRd)$.

In the case of fractional Navier--Stokes on the torus, we can replace $\RRd$ by $\T^d$ and then the argument above works locally in space. Therefore, we can still determine the critical space on the torus. The critical spaces determined via scaling coincide with those obtained via the abstract setting in \cite{PSW18, PW18} for the deterministic case. The stochastic variant of the theory of critical spaces was developed in \cite{AV22, AV22_part2} and we will use their set-up in this paper to study well-posedness of \eqref{eq:FSNSE_intro}.

\section{Statement of the main results}\label{sec:mainresuls}
In this section, we first introduce the necessary function spaces in Section \ref{subsec:Helmholtz}. Afterwards, in Sections \ref{subsec:LWP} and \ref{subsec:GWP} we state our main results about local and global well-posedness of the stochastic hypoviscous Navier--Stokes equations.

\subsection{The Helmholtz projection and function spaces}\label{subsec:Helmholtz} 
We introduce the Helmholtz projection and the corresponding divergence-free function spaces. For more details, we refer to, e.g., \cite[Chapter III]{Ga11} or \cite[Chapter 2]{robinson2016three}.\\

For any $s\in\mathbb{R}$, $p\in[1,\infty]$, $q\in(1,\infty)$ and integers
$d,m\geq 1$, let
\[
L^q(\mathbb{T}^d;\mathbb{R}^m),\quad
H^{s,q}(\mathbb{T}^d;\mathbb{R}^m)
\quad\text{and}\quad
B^s_{q,p}(\mathbb{T}^d;\mathbb{R}^m)
\]
denote the vector-valued \emph{Lebesgue}, \emph{Bessel-potential} and \emph{Besov spaces} on $\mathbb{T}^d$
with values in $\mathbb{R}^m$, respectively. We refer to \cite[Chapter 3]{ST87} for more details on periodic function spaces.

Let $f=(f^n)_{n=1}^d\in C^\infty(\mathbb{T}^d;\mathbb{R}^d)$ and let  $\widehat{f^n}(k)$ be the $k$-th Fourier coefficient. We denote by $\mathbb{P}$ the \emph{Helmholtz projection}, which is given by
$\mathbb{P}f=((\mathbb{P}f)^n)_{n=1}^d$, where
\begin{equation*}
\widehat{(\mathbb{P}f)^n}(k)
:=
\widehat{f^n}(k)
-
\sum_{j=1}^d
\frac{k_jk_n}{|k|^2}\widehat{f^j}(k),
\qquad
k\in\mathbb{Z}^d\setminus\{0\},
\qquad
\widehat{(\mathbb{P}f)^n}(0)
:=
\widehat{f^n}(0).
\end{equation*}
In particular, it holds that $\operatorname{div}(\mathbb{P}f)=0$ for all $f\in C^\infty(\mathbb{T}^d;\mathbb{R}^d)$.
By duality the Helmholtz projection extends to a mapping 
$\mathbb{P}:
\mathscr{D}'(\mathbb{T}^d;\mathbb{R}^d)
\to
\mathscr{D}'(\mathbb{T}^d;\mathbb{R}^d)$, where $\mathscr{D}'(\mathbb{T}^d;\mathbb{R}^d)
=
C^\infty(\mathbb{T}^d;\mathbb{R}^d)'$.

Let $s\in\mathbb{R}$ and  $q\in(1,\infty)$. Then by standard Fourier multiplier theorems
(see \cite[Theorem~5.7.11]{HNVW16}), $\mathbb{P}$ restricts uniquely to a bounded linear operator
\begin{equation*}
\mathbb{P}:
H^{s,q}(\mathbb{T}^d;\mathbb{R}^d)
\to
H^{s,q}(\mathbb{T}^d;\mathbb{R}^d)
.
\end{equation*}
For $s$ and $q$ as specified above and
$Y\in\{L^q,H^{s,q},B^s_{q,p}\}$, we define the divergence-free spaces by
\begin{equation*}
\mathbb{Y}(\mathbb{T}^d;\RRd)
:=
\left\{
f\in Y(\mathbb{T}^d;\mathbb{R}^d)
:
\operatorname{div}f=0
\text{ in }\mathscr{D}'(\mathbb{T}^d)
\right\},
\qquad
\|f\|_{\mathbb{Y}(\mathbb{T}^d;\RRd)}
:=
\|f\|_{Y(\mathbb{T}^d;\mathbb{R}^d)}.
\end{equation*}
Note that $\mathbb{Y}(\mathbb{T}^d;\RRd)
=
\mathbb{P}(Y(\mathbb{T}^d;\mathbb{R}^d))
$
for $Y\in\{L^q,H^{s,q},B^s_{q,p}\}$ and $s,q$ as above.
By standard interpolation theory
(see \cite[Section~3.6]{ST87} and \cite[Theorem~1.2.4]{Tr78}),
\begin{equation*}
\begin{aligned}
\Hs^{s,q}(\mathbb{T}^d;\RRd)
&=
\bigl[
\Hs^{s_0,q}(\mathbb{T}^d;\RRd),
\Hs^{s_1,q}(\mathbb{T}^d;\RRd)
\bigr]_\theta,
\\
\mathbb{B}^s_{q,p}(\mathbb{T}^d;\RRd)
&=
\bigl(
\Hs^{s_0,q}(\mathbb{T}^d;\RRd),
\Hs^{s_1,q}(\mathbb{T}^d;\RRd)
\bigr)_{\theta,p},
\end{aligned}
\end{equation*}
for all $s_0,s_1\in\mathbb{R}$ with $s_0\neq s_1$, $\theta\in(0,1)$, $
s:=(1-\theta)s_0+\theta s_1$, and $p,q\in(1,\infty)$.

\subsection{Local well-posedness results}\label{subsec:LWP} As is standard in the literature, we will apply the Helmholtz projection to study well-posedness for the Navier--Stokes equations. For more details on recovering the pressure from a solution of the projected equation, see for example \cite[Chapter 5]{robinson2016three} and \cite[Proposition 3.3]{crisan2022solution}. In general, the pressure term will not be of finite variation but rather a more general semimartingale, as explained in \cite[Section 3c]{street2021semi}. Hence, the inclusion of a stochastically integrated pressure term in \eqref{eq:FSNSE_intro}. After applying the Helmholtz projection $\P$ to \eqref{eq:FSNSE_intro} we obtain the  problem
\begin{equation}\label{eq:abstract_eq}
       \left\{
        \begin{aligned}
            &\dd u + A u \dd t = F(u)\dd t + G(u)\dd W_{\ell^2}&\text{ for }&t\geq 0,\\
            &u|_{t=0}=u_0,
        \end{aligned}
        \right.
\end{equation}
with 
\begin{equation*}
    \begin{aligned}
        Au&:= (-\del)^\gam u,\quad F(u):=-\P [\grad \cdot (u\otimes u)]\quad \text{ and }\quad G(u):= (\P[g_n(\cdot, u)])_{n\geq1},
    \end{aligned}
\end{equation*}
where we have used $\P (-\del)^\gam u = (-\del)^\gam \P u = (-\del)^\gam u$ on $\T^d$ and $\grad \cdot (u\otimes u) = (u\cdot \grad)u$ (conservative form for the Navier--Stokes nonlinearity) since $\grad\cdot u =0$. We recall that $W_{\ell^2}$ is a cylindrical Brownian motion in the Hilbert space $\ell^2$, see \cite[Section 2.6]{AV25_survey} for details.\\

We introduce some notation and assumptions. Let $(\Om, \mc{F}, (\mc{F}_t)_{t\geq 0}, \mathbf{P})$ be a fixed  complete filtered probability space and let $\mc{P}$ and $\mc{B}$ denote the progressive and Borel $\sigma$-algebras, respectively. Moreover, we will always consider dimensions $d\geq 2$.

\begin{assumption}\label{ass:X0X1_g}
 Let $\gam\in (\frac 12,1]$, $s\in[\gam, 2\gam)$ and $q\in (2,\infty)$. We set
\begin{equation*}
    X_0:= \Hs^{-s,q}(\T^d;\RRd)\quad \text{ and }\quad X_1:=\Hs^{-s+2\gam,q}(\T^d;\RRd)
\end{equation*}
and consequently
\begin{equation*}
    X_{\beta}: =[X_0, X_1]_\beta = \Hs^{-s+2\gam\beta,q}(\T^d;\RRd),\qquad \beta\in(0,1).
\end{equation*}
Furthermore, we assume that $g:=(g_n)_{n\geq1}:\T^d\times \RR^d\to \ell^2(\NN_1; \RRd)$ is a measurable mapping which satisfies $g(\cdot, 0)\in L^\infty(\T^d;\ell^2(\NN_1;\RRd))$ and is globally Lipschitz uniformly in $x$, i.e.,
\begin{equation*}
    \sup_{x\in \T^d}\|g(x,u)-g(x,v)\|_{\ell^2(\NN_1;\RRd)}\lesssim |u-v|, \qquad u,v\in \RRd.
\end{equation*}
\end{assumption}

We obtain the following local well-posedness result for the stochastic hypoviscous Navier--Stokes equations.

\begin{theorem}[Local well-posedness]\label{thm:localWP}
    Let $\gam \in (\frac 12, 1]$, $s\in [\gam, 2\gam)$ and suppose that Assumption \ref{ass:X0X1_g} holds. Furthermore, assume that $p,q\in (2,\infty)$ satisfy
        \begin{equation*}
        \max\{|s-1|, 3\gam-s-1\}<\frac dq < 4\gam-s-1\quad \text{ and }\quad \frac{2}{p}+\frac{d}{q\gam}\leq 4-\frac{s+1}{\gam}.
    \end{equation*}
    Moreover, set $\kappa:=-1 + \frac{p}{2}\big(4-\gam^{-1}(\frac{d}{q}+s+1)\big)$. Then for all $u_0 \in L^0_{\mc{F}_0}(\Om; \BB_{q,p}^{\frac{d}{q}+1-2\gam}(\T^d;\RRd))$ there exists a unique maximal solution $(u,\sigma)$ to \eqref{eq:abstract_eq} satisfying $\sigma>0$ a.s. and
    \begin{align*}
        u&\in H^{\theta,p}_{\loc}([0,\sigma),w_{\kappa}; \Hs^{-s+2\gam(1-\theta),q}(\T^d; \RRd)) \,\,\text{a.s. for all }\theta\in [0,\tfrac 12),\\
        u&\in C([0, \sigma); \BB_{q,p}^{\frac{d}{q}+1-2\gam}(\T^d; \RRd))\,\, \text{ a.s.}
    \end{align*}
    The solution $(u,\sigma)$ instantaneously regularises in time and space:
\begin{align*}
u&\in L^r_\loc((0,\sigma);\Hs^{\gam,\zeta}(\T^d;\RRd))
\quad\text{a.s.}, &&r,\zeta\in(2,\infty),\\
u&\in C^{\theta-\eps}_\loc((0,\sigma);
\Hs^{-\gam+2\gam(1-\theta),\zeta}(\T^d;\RRd))
\quad\text{a.s.}, &&
\theta\in(0,\tfrac 12),\ \eps\in(0,\theta),\ \zeta\in(2,\infty).
\end{align*}
\end{theorem}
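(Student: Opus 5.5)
We will deduce Theorem \ref{thm:localWP} from the abstract local well-posedness theory for stochastic evolution equations in critical spaces of \cite{AV22,AV22_part2} (see also \cite[Theorem 4.?]{AV25_survey}), applied to \eqref{eq:abstract_eq} on the pair $(X_0,X_1)$ from Assumption \ref{ass:X0X1_g}. Two structural inputs are needed, followed by a criticality check and a regularisation step.

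\textbf{Linear part.} We first show that $A=(-\del)^\gam$ on $X_0=\Hs^{-s,q}$ with domain $X_1$ has stochastic maximal $L^p$-regularity with power weight $w_\kappa$, i.e.\ $A\in \SMR^\bullet_{p,\kappa}$. Since $A$ commutes with $\P$, $\Hs^{-s,q}$ is isomorphic to a complemented subspace of $L^q$ via $(1-\del)^{-s/2}$. Moreover, $-(-\del)^\gam$ generates an analytic contraction-type semigroup with a bounded $H^\infty$-calculus of angle $<\pi/2$ on $L^q(\T^d)$, obtained by subordination from the Laplacian or by Fourier multiplier theorems. The mean value is handled separately, or $A$ is shifted by a constant on the zero mode. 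The result of van Neerven--Veraar--Weis on $H^\infty$-calculus then gives stochastic maximal regularity on $L^q$ with $q\ge2$, together with the weighted version for $\kappa\in[0,\frac p2-1)$. Mixed-derivative and trace embeddings then yield the path spaces $H^{\theta,p}(w_\kappa;X_{1-\theta})$ and $C([0,\sigma);X^{\mathrm{Tr}}_{\kappa,p})$. Here $X^{\mathrm{Tr}}_{\kappa,p}=(X_0,X_1)_{1-\frac{1+\kappa}p,p}=\BB^{-s+2\gam(1-\frac{1+\kappa}{p})}_{q,p}$, and with the given $\kappa$ this is exactly $\BB^{\frac dq+1-2\gam}_{q,p}$, since $-s+2\gam-\frac{2\gam(1+\kappa)}{p}=-s+2\gam-\gam(4-\gam^{-1}(\frac dq+s+1))=\frac dq+1-2\gam$. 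The constraint $\kappa<\frac p2-1$ follows from $\frac dq>3\gam-s-1$, and $\kappa\ge0$ follows from $\frac2p+\frac d{q\gam}\le 4-\frac{s+1}\gam$.

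\textbf{Nonlinearities.} For $G$, we use $\gam\le s$, so $X_{1/2}=\Hs^{-s+\gam,q}\supseteq \Hs^{0,q}$ up to the needed direction. The Lipschitz assumption on $g$ then gives $\|G(u)-G(v)\|_{\gamma(\ell^2,X_{1/2})}\lesssim \|g(u)-g(v)\|_{L^q(\ell^2)}\lesssim\|u-v\|_{L^q}$, using the square-function identification $\gamma(\ell^2,L^q)=L^q(\ell^2)$ and boundedness of $\P$. Since $L^q\hookrightarrow \Hs^{-s+\gam,q}$ and $L^q$ is an intermediate space $X_{\beta}$ with $\beta=\frac{s}{2\gam}<1$, $G$ is globally Lipschitz from a space of subcritical order and therefore harmless. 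For $F(u)=-\P\grad\cdot(u\otimes u)$, we need the bilinear estimate
\[\|F(u)-F(v)\|_{X_0}\lesssim (\|u\|_{X_\beta}+\|v\|_{X_\beta})\|u-v\|_{X_\beta}.\]
We obtain it from $\|\grad\cdot(u\otimes v)\|_{H^{-s,q}}\lesssim \|uv\|_{H^{1-s,q}}$ together with Sobolev embedding and a product estimate. Taking $r$ with $1-s<0$, i.e.\ $s>1$, we use $L^{r}\hookrightarrow H^{1-s,q}$ with $\frac dr=\frac dq+s-1$, and then Hölder with $u,v\in L^{2r}\hookleftarrow \Hs^{-s+2\gam\beta,q}$. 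When $s\le1$ we instead use a fractional product rule. The conditions $|s-1|<\frac dq<4\gam-s-1$ are precisely what make these embeddings valid and $\beta<1$. This yields $2\beta=\frac1{2\gam}(\frac dq+s+1)+1-\ldots$; explicitly, $-s+2\gam\beta=\frac{1}{2}(\frac dq+1-s)$, so $\beta=\frac1{4\gam}(\frac dq+1+s)$. We then verify the criticality condition $\frac{1+\kappa}{p}\le\frac{2}{2}(1-\beta)\cdot\frac{\,2\,}{1}\cdots$ from \cite{AV22}, namely $\frac{1+\kappa}p\le 2(1-\beta)$ for quadratic $F$. With our choice of $\kappa$, equality holds, which is the critical case. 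This is the step I expect to be the main obstacle: arranging the exponents so that $\beta$, the Sobolev and product embeddings, and the critical identity all close simultaneously, especially in the regime $s<1$ where negative-order products are unavailable.

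\textbf{Conclusion.} With these inputs, the abstract theorem yields the unique maximal solution with $\sigma>0$ and the stated path regularity. For instantaneous regularisation, we apply the bootstrapping results of \cite{AV22_part2}, which use the fact that the weight $\kappa$ allows shifting to larger $p$ and $q$ away from $t=0$ and changing $s$ down to $\gam$. This gives $u\in L^r_\loc((0,\sigma);\Hs^{\gam,\zeta})$ and Hölder regularity in $\Hs^{-\gam+2\gam(1-\theta),\zeta}$, by rerunning the setting with $s=\gam$ and arbitrary $(r,\zeta)$ and using uniqueness across settings.
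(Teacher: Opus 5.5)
Your construction of the maximal solution matches the paper's Step~1 and Lemma~\ref{lem:LWP_estimates}. The bounded $\Hinf$-calculus of $(-\del)^\gam$ on $\Hs^{-s,q}$ gives $(A,0)\in\SMR^\bullet_{p,\kappa}$, and the quadratic estimate holds with $\beta=\frac1{4\gam}(\frac dq+s+1)$. The criticality condition is then an equality, and your checks of $\kappa\in[0,\frac p2-1)$ and of the trace space are correct. The case $s<1$ is not a real obstacle. There $1-s>0$, so what you need is a positive-order product estimate. Lemma~\ref{lem:paraproduct} with $q_1=\frac{2dq}{d+(1-s)q}$ and $q_2=\frac{2dq}{d-(1-s)q}$ gives the same $\theta=\frac12(\frac dq+1-s)$, and the embedding $H^{\theta,q}\hookrightarrow H^{1-s,q_1}$ is valid exactly because $1-s<\frac dq$.

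The genuine gap is the instantaneous \emph{spatial} regularisation. Gaining time integrability $r$ does follow from the weighted theory (the paper's Step~2). However, your plan of ``rerunning the setting with $s=\gam$ and arbitrary $(r,\zeta)$'' does not work. At $s=\gam$ the hypotheses of the theorem you are proving read $\max\{1-\gam,2\gam-1\}<\frac d\zeta<3\gam-1$. This forces $\zeta<\frac{d}{2\gam-1}$, so arbitrary $\zeta$ is out of reach. You would need a separate, non-critical well-posedness theory in $(\Hs^{-\gam,\zeta},\Hs^{\gam,\zeta})$ with its own nonlinear estimates, and you do not supply one. Moreover, ``uniqueness across settings'' is not automatic. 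Restarting at $t_0>0$ requires $u(t_0)$ to lie in the new trace space. Identifying the new maximal time with $\sigma$ then requires either that $u$ already lies in the new solution class, which is the conclusion you want, or control of $u$ in the new norms through a blow-up criterion. The paper's Theorem~\ref{thm:indep_parameter_blow-up} does perform such a restart, but only \emph{after} regularisation is known, since it uses $u(t_0)\in C^\lambda$.

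The paper avoids all this with a \emph{linear} bootstrap. On $\{\sigma_n>t_1\}\times(t_1,\sigma_n)$ it proves $\|F(u)\|_{\Hs^{-s,\zeta+\delta}}+\|G(u)\|_{\Hs^{-s+\gam,\zeta+\delta}(\ell^2)}\lesssim 1+\|u\|^2_{\Hs^{-s+2\gam,\zeta}}$ with a fixed increment $\delta>0$; this is where $\frac dq<4\gam-s-1$ is used again. It places $u(t_1)$ in a weighted trace space with $\tilde\kappa=R/4$ to absorb the Sobolev loss. It then applies linear stochastic maximal regularity, with $F(u)$ and $G(u)$ as given inhomogeneities, to raise $\zeta$ step by step. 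Next it uses $\Hs^{-s+2\gam-\eps,\zeta}\hookrightarrow L^\infty\cap\Hs^{1-\gam,\zeta}$ (for $\gam>\frac{s+1}3$, iterating otherwise). This gives $F(u)\in L^\infty(t_1,\sigma_n;\Hs^{-\gam,\zeta})$ and $G(u)\in L^\infty(t_1,\sigma_n;\Ls^\zeta(\ell^2))$. Finally it applies linear maximal regularity in $(\Hs^{-\gam,\zeta},\Hs^{\gam,\zeta})$. These estimates are exactly what your final step is missing.
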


The proof of Theorem \ref{thm:localWP} is given in Section  \ref{sec:LWP} and relies on stochastic maximal regularity for the linear equation (obtained via the bounded $\Hinf$-calculus for the leading differential operator $A$) and appropriate estimates on the nonlinearities $F$ and $G$.

\begin{remark}
    We make the following remarks about Theorem \ref{thm:localWP}.
    \begin{enumerate}[(i)]
        \item The regularity $\frac{d}{q}+1-2\gam$ for the initial data is
precisely the smoothness exponent of the critical space for the stochastic fractional Navier--Stokes equations as established in Section \ref{subsec:scaling}.
        \item The local well-posedness result actually holds for a larger range $\gam\in (\frac 12, \frac{d+2}{3})$. Indeed, for these values of $\gam$ there is a nonempty set of feasible $s$ in Theorem \ref{thm:localWP}.
        \item By inspection of the proof of Theorem \ref{thm:localWP}, one can verify that the result actually holds for noise terms $G(u)$ that grow quadratically. For simplicity, we only consider noise with linear growth, because this is the setting we need for the global well-posedness theory.
    \end{enumerate}
\end{remark}

\subsection{Global well-posedness results}\label{subsec:GWP}
We build on the local well-posedness by proving global well-posedness in the particular setting where $d=2$ and $q$ is sufficiently large relative to $\gamma$. Our proof relies on energy estimates of the vorticity; in fact, we first show global well-posedness of the vorticity form and then construct a global solution of the velocity form through the Biot--Savart operator. In order to obtain a closed-form expression for the vorticity and to identify the velocity via the Biot--Savart operator, we further restrict to noise that is linear and mean-free. To be precise, we consider noise $G(u)=(g_n(\cdot, u))_{n\geq 1}$ given by
\begin{equation} \label{global_noise_thm}g_n(x,u):= \Pi[\alpha_n(x)u]\quad \text{ with }(\alpha_n)_{n\geq 1}\in \ell^2(W^{1,\infty}(\T^2)),\end{equation}
where $\Pi$ is the projection onto the mean-free subspace, see Section \ref{subsec:proofglobal} for more details. \\

The necessity of the vorticity form to obtain energy estimates, which will then imply global well-posedness by a blow-up criterion, is explained in Section \ref{subsec:roadmap}. Furthermore, such estimates can only be shown to hold in 2D as the vorticity form in 3D contains an additional vortex stretching term which will not cancel in these computations. The requirement for a large $q$ relative to $\gamma$ is to ensure an embedding of $L^\zeta$ spaces into the trace space appearing in the blow-up criterion: as $\gamma$ approaches $1/2$, $q$ must approach infinity. A complete statement of the theorem is given below.

\begin{theorem}[Global well-posedness of the stochastic hypoviscous Navier--Stokes equations]\label{thm:globalWPvelocity}
In addition to the conditions of Theorem \ref{thm:localWP}, assume that $2/q< 2\gam-1$ and that $u_0$ is zero-mean with $\nabla \times u_0 \in L^0_{\mc{F}_0}(\Om; \dot{B}_{q,p}^{2/q+1-2\gam}(\T^2;\R))$.
Then the maximal solution $(u,\sigma)$ of \eqref{eq:abstract_eq}, for the choice of $g_n$ given by \eqref{global_noise_thm}, is global.
\end{theorem}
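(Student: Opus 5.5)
Our strategy is to combine a blow-up criterion from the stochastic critical-space theory of \cite{AV22_part2} with a priori $L^\zeta$-energy estimates for the vorticity. Because $F$ is bilinear and $G$ has linear growth, \cite{AV22_part2} gives the criterion
\[
\mathbf{P}\Big(\sigma<T,\ \sup_{t\in[0,\sigma)}\|u(t)\|_{X_{1-\kappa/p,p}}+\|u\|_{L^p(0,\sigma;X_{1-\kappa/p})}<\infty\Big)=0,
\]
or a variant in which the trace norm is replaced by a slightly stronger norm. Here the trace space is $X_{1-\frac{1+\kappa}{p},p}=\BB^{d/q+1-2\gam}_{q,p}$. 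It therefore suffices to bound $u$ in a space that embeds into this trace space and into the $L^p$-in-time space, uniformly on $[0,\sigma\wedge T)$ and almost surely.

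\textbf{Step 1: vorticity equation.} Let $\omega=\nabla\times u$. Since $g_n(x,u)=\Pi[\alpha_n u]$ and the curl annihilates constants, taking the curl of \eqref{eq:FSNSE_intro} (the pressures drop out) gives
\[
\dd\omega+[(-\del)^\gam\omega+u\cdot\grad\omega]\dd t=\sum_n\big(\alpha_n\omega+\grad^\perp\alpha_n\cdot u\big)\dd W^n_t,\qquad u=\BS(\omega).
\]
We first solve this scalar equation locally by the same maximal regularity argument as in Theorem \ref{thm:localWP}, taking data in $\dot B^{2/q+1-2\gam}_{q,p}$. We then check that $u:=\BS(\omega)$ is the solution of \eqref{eq:abstract_eq} given by uniqueness there, so that the two maximal times agree. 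Zero mean is used to identify $u$ with $\BS(\omega)$, since $\Pi$ preserves the mean-free property.

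\textbf{Step 2: $L^\zeta$ estimates.} After the instantaneous regularisation of Theorem \ref{thm:localWP}, we may start at a small positive time where $\omega\in L^\zeta$ for every $\zeta<\infty$. We apply Itô's formula to $\|\omega\|_{L^\zeta}^\zeta$, which we justify with a localisation by stopping times and a smoothing argument.

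The transport term vanishes because $\grad\cdot u=0$. The dissipative term satisfies the Córdoba--Córdoba inequality
\[
\int|\omega|^{\zeta-2}\omega(-\del)^\gam\omega\ge \tfrac{2}{\zeta}\|(-\del)^{\gam/2}|\omega|^{\zeta/2}\|_{L^2}^2\ge0 .
\]
The noise is linear, so the Itô correction is bounded by
\[
C\zeta^2\sum_n\big(\|\alpha_n\|_{W^{1,\infty}}^2\big)\big(\|\omega\|_{L^\zeta}^\zeta+\|u\|_{L^\zeta}^2\|\omega\|_{L^\zeta}^{\zeta-2}\big),
\]
and $\|u\|_{L^\zeta}\lesssim\|\omega\|_{L^\zeta}$ by Biot--Savart and Poincaré, using zero mean. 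The martingale term is handled by the Burkholder--Davis--Gundy inequality, or more robustly by applying Itô's formula to $\log(1+\|\omega\|_{L^\zeta}^\zeta)$ and using a stochastic Gronwall lemma. This yields $\sup_{t<\sigma\wedge T}\|\omega\|_{L^\zeta}<\infty$ a.s. The same computation also gives control of $\int_0^{\sigma\wedge T}\|(-\del)^{\gam/2}|\omega|^{\zeta/2}\|_{L^2}^2\dd t$.

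\textbf{Step 3: closing via the blow-up criterion, the main obstacle.} We need $\sup_t\|u\|_{\BB^{2/q+1-2\gam}_{q,p}}<\infty$. Since $u=\BS(\omega)\in W^{1,\zeta}$, and the condition $2/q<2\gam-1$ makes $2/q+1-2\gam<0$, we get $W^{1,\zeta}\hookrightarrow L^\zeta\hookrightarrow B^{0}_{\zeta,\infty}\hookrightarrow \BB^{2/q+1-2\gam}_{q,p}$ for $\zeta\ge q$, because the smoothness index is strictly negative. This is exactly where the hypothesis $2/q<2\gam-1$ enters. The $L^p(X_{1-\kappa/p})$ part needs $u\in L^p_t\Hs^{-s+2\gam-2\gam\kappa/p,q}$, and we get it from $L^\infty_tW^{1,\zeta}$ as long as the spatial index is at most $1$. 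If it exceeds $1$, we instead use the dissipation bound from Step 2 together with interpolation. Alternatively, we invoke the criterion in the form \cite[Theorem 4.10]{AV22_part2}, which only requires boundedness in the trace space in the critical setting.

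The most delicate point is the rigorous Itô formula for $\|\omega\|_{L^\zeta}^\zeta$ with the nonlocal operator, together with the use of blow-up criteria at a positive starting time, where the stronger subcritical regularity allows criteria involving only sup norms.
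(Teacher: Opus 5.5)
\textbf{Gap: the two maximal times.} You claim that uniqueness makes the two maximal times agree, and this is not justified. Uniqueness and maximality of $(u,\sigma)$ only give $\Theta\le\sigma$ and $u=\BS\xi$ on $[0,\Theta)$, where $(\xi,\Theta)$ is the maximal solution of the vorticity equation (this is Lemma \ref{lemma:vorticity_to_velocity}). The reverse inequality would require $\nabla\times u$ to lie in the vorticity solution class on $[0,\sigma)$. That is one full derivative more than Theorem \ref{thm:localWP} provides.

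For the same reason, your sentence in Step 2 is false as stated. The velocity regularisation gives $u\in L^r_{\rm loc}\Hs^{\gam,\zeta}$, hence only $\nabla\times u\in H^{\gam-1,\zeta}$, which is not in $L^\zeta$ when $\gam<1$. So the It\^o formula can only be applied to the vorticity solution. Your Step 2 bound is therefore $\sup_{[t_0,\Theta\wedge T)}\|\xi\|_{L^\zeta}<\infty$, and it controls $u=\BS\xi$ only up to $\Theta$. The velocity blow-up criterion in Step 3 concerns the event $\{\sigma<T\}$ and needs bounds up to $\sigma$. It says nothing on $\{\Theta<\sigma\}$, so the argument does not close.

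\textbf{Repair.} Apply a blow-up criterion to the vorticity equation itself, conclude $\Theta=\infty$, and then use $\sigma\ge\Theta$. Your treatment of the $L^p(X_{1-\kappa/p})$ term does not carry over, however. For the vorticity this term is $\|\xi\|_{L^p(0,\Theta;\dot H^{\tilde s,q})}$ with $\tilde s=\frac2q+1-2\gam(1-\frac1p)$. This exponent can be positive in the admissible range: for example $\gam=\frac34$, $s=\frac65$, $q=8$, $p=3$ satisfy all hypotheses and give $\tilde s=\frac14$. So this term is not controlled by $L^\infty_tL^\zeta$.

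You therefore need a criterion involving only $\sup_{t\in[t_0,\Theta)}\|\xi(t)\|_{B^{2/q+1-2\gam}_{q_1,\infty}}$ with $q_1>q$. This is Theorem \ref{thm:indep_parameter_blow-up}(1) transferred to the vorticity equation, proved by restarting at a positive time with a non-critical weight and using instantaneous regularisation. Combine it with $L^\zeta\hookrightarrow B^{2/q+1-2\gam}_{\zeta,\infty}$. Then take the limit $t_0\downarrow 0$ to dispose of the event $\{\Theta\le t_0\}$, which your proposal leaves implicit. This yields $\mathbf{P}(\Theta<T)=0$ for every $T$, which is exactly how the paper concludes.
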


Section \ref{section global WP} is dedicated to the proof of Theorem \ref{thm:globalWPvelocity}, which concludes in Section \ref{subsec:proofglobal}. A more thorough exposition of the strategy is given in Section \ref{subsec:roadmap}.

\section{Local well-posedness}\label{sec:LWP}
In this section, we prove the local well-posedness result of Theorem \ref{thm:localWP}. For this we need to prove estimates on the nonlinearities. Before turning to those estimates we recall the following paraproduct estimate for products of functions in Bessel potential spaces.

\begin{lemma}[{\cite[Proposition 2.1.1]{Ta00}}]\label{lem:paraproduct}
    Let $s\geq 0$ and $q\in(1,\infty)$ be such that 
    \begin{equation*}
        \frac{1}{q} = \frac{1}{q_1}+\frac{1}{q_2}= \frac{1}{\tilde{q}_1}+\frac{1}{\tilde{q}_2},\quad q_2, \tilde{q}_2\in (1,\infty),\, q_1,\tilde{q}_1\in (1,\infty].
    \end{equation*}
    Then
    \begin{equation*}
        \| fg \|_{H^{s,q}(\T^d)}\lesssim \|f\|_{L^{q_1}(\T^d)}\|g\|_{H^{s,q_2}(\T^d)} + \|g\|_{L^{\tilde{q}_1}(\T^d)}\|f\|_{H^{s, \tilde{q}_2}(\T^d)}, 
    \end{equation*}
    for $f\in L^{q_1}(\T^d)\cap H^{s, \tilde{q}_2}(\T^d)$ and $ g\in L^{\tilde{q}_1}(\T^d) \cap H^{s,q_2}(\T^d)$, where the constant is independent of $f$ and $g$.
\end{lemma}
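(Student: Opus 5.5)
The lemma is a Kato--Ponce type fractional Leibniz rule on the torus. I would prove it by Bony's paraproduct decomposition rather than by commutator estimates. Fix a smooth Littlewood--Paley partition of unity $(\varphi_j)_{j\geq 0}$ on $\mathbb{Z}^d$. Write $\Delta_j$ for the periodic Fourier multipliers and $S_j=\sum_{k<j}\Delta_k$. Then split
\begin{equation*}
fg = T_f g + T_g f + R(f,g),\qquad T_f g:=\sum_j S_{j-2}f\,\Delta_j g,\qquad R(f,g):=\sum_{|j-k|\leq 1}\Delta_j f\,\Delta_k g .
\end{equation*}
The norm on $H^{s,q}(\T^d)$ will be the Littlewood--Paley characterisation $\|h\|_{H^{s,q}}\eqsim \|(\sum_j 2^{2js}|\Delta_j h|^2)^{1/2}\|_{L^q}$, valid for $q\in(1,\infty)$. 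This follows on the torus from the Marcinkiewicz/Mikhlin multiplier theorem or by transference from $\RR^d$.

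\emph{Paraproduct $T_f g$.} Each summand $S_{j-2}f\,\Delta_j g$ has Fourier support in an annulus of size $\sim 2^j$. Hence
\begin{equation*}
\|T_fg\|_{H^{s,q}}\lesssim \Big\|\Big(\sum_j 2^{2js}|S_{j-2}f|^2|\Delta_j g|^2\Big)^{1/2}\Big\|_{L^q}\leq \Big\|\sup_j|S_{j-2}f|\,\Big(\sum_j 2^{2js}|\Delta_jg|^2\Big)^{1/2}\Big\|_{L^q}.
\end{equation*}
Bound $\sup_j|S_{j-2}f|\lesssim Mf$ with $M$ the Hardy--Littlewood maximal function. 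Then apply Hölder with $\frac1q=\frac1{q_1}+\frac1{q_2}$, using boundedness of $M$ on $L^{q_1}$. When $q_1=\infty$ the trivial bound $\|S_jf\|_\infty\lesssim\|f\|_\infty$ suffices. This gives $\|f\|_{L^{q_1}}\|g\|_{H^{s,q_2}}$. Symmetrically, $T_gf$ is bounded by $\|g\|_{L^{\tilde q_1}}\|f\|_{H^{s,\tilde q_2}}$.

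\emph{Remainder $R(f,g)$.} This is the step I expect to be the main obstacle, and it is where $s\geq 0$ matters. The summands $\Delta_jf\,\Delta_kg$ with $|j-k|\leq1$ have Fourier support only in balls of radius $\sim 2^j$, not annuli. So I would first estimate for $s>0$, using the ball-support version of the square function estimate, $\|\sum_j h_j\|_{H^{s,q}}\lesssim\|(\sum_j2^{2js}|h_j|^2)^{1/2}\|_{L^q}$ for $\operatorname{supp}\hat h_j\subset B(0,C2^j)$, which requires $s>0$. Then
\begin{equation*}
\Big(\sum_j 2^{2js}|\Delta_jf|^2|\tilde\Delta_jg|^2\Big)^{1/2}\leq \sup_j|\tilde\Delta_j g|\Big(\sum_j2^{2js}|\Delta_jf|^2\Big)^{1/2}\lesssim Mg\,\Big(\sum_j2^{2js}|\Delta_jf|^2\Big)^{1/2},
\end{equation*}
and Hölder gives $\|g\|_{L^{\tilde q_1}}\|f\|_{H^{s,\tilde q_2}}$. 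The case $s=0$ is plain Hölder, $\|fg\|_{L^q}\leq\|f\|_{L^{q_1}}\|g\|_{L^{q_2}}$, so the ball-support issue never arises.

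Finally, I would check that the torus setting introduces no extra difficulty. The zero mode is handled by $\Delta_0$, and all multiplier and maximal-function bounds transfer from $\RR^d$ to periodic functions. Summing the three pieces gives the stated bound, with a constant independent of $f$ and $g$. Since this is exactly \cite[Proposition 2.1.1]{Ta00}, in the paper one may simply cite it. The sketch above is the argument I would reproduce if needed.
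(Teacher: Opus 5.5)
Your proposal is correct: it is the standard Bony-paraproduct proof of the Kato--Ponce estimate, and the paper gives no proof of its own, only quoting the whole-space result \cite[Proposition 2.1.1]{Ta00} and using it on $\T^d$ without comment, so your remark on transferring the Littlewood--Paley and maximal-function bounds to the torus is a useful addition. The one slip is bookkeeping: with $S_j=\sum_{k<j}\Delta_k$ and $T_fg=\sum_j S_{j-2}f\,\Delta_j g$, the pairs with $|j-k|=2$ are covered by none of $T_fg$, $T_gf$, $R(f,g)$, so the remainder should run over $|j-k|\leq 2$ (equivalently $\tilde{\Delta}_j=\sum_{|l|\leq 2}\Delta_{j+l}$), which changes nothing in your estimates.
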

We now prove the required estimates on the nonlinearities.
\begin{lemma} \label{lem:LWP_estimates}
Let $\gam>\half$, $s\in [\gam, 2\gam)$ and suppose that Assumption \ref{ass:X0X1_g} holds. For 
        \begin{equation*}
        |s-1|<\frac dq< 4\gam-s-1\quad \text{ and }\quad \beta: = \frac{1}{4\gam}\big(\frac{d}{q}+s+1\big)\in (\tfrac 12,1),
    \end{equation*}
    and $F$ and $G$ as in \eqref{eq:abstract_eq}, it holds that 
    \begin{align*}
        \|F(u)-F(v)&\|_{X_0}+\|G(u)-G(v)\|_{\gam(\ell^2; X_\half)}\\
       &\; \lesssim \big(1+ \|u\|_{X_\beta}+ \|v\|_{X_\beta}\big)\|u-v\|_{X_\beta},
    \end{align*}
    for all $u,v\in X_1$.
\end{lemma}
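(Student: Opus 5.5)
We treat $F$ and $G$ separately. Throughout, $X_\beta=\Hs^{-s+2\gam\beta,q}$ with $-s+2\gam\beta=\frac12(\frac dq+1-s)$, and $X_\half=\Hs^{-s+\gam,q}$.

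\emph{The term $F$.} Write
\begin{equation*}
F(u)-F(v)=-\P\,\grad\cdot\big((u-v)\otimes u+v\otimes(u-v)\big).
\end{equation*}
Since $\P$ and $\grad$ are bounded from $H^{-s+1,q}$ into $H^{-s,q}$, it suffices to prove the bilinear bound
\begin{equation*}
\|f g\|_{H^{1-s,q}}\lesssim \|f\|_{H^{\frac12(\frac dq+1-s),q}}\,\|g\|_{H^{\frac12(\frac dq+1-s),q}}
\end{equation*}
componentwise. The smoothness $r:=\frac12(\frac dq+1-s)$ is positive because $\frac dq>s-1$. We split according to the sign of $1-s$.

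\begin{enumerate}[(i)]
\item If $s\le 1$, we apply Lemma \ref{lem:paraproduct} with smoothness $1-s\ge0$ and integrability $\rho<q$, where $\frac1\rho=\frac1q+\frac1{q_1}$. Sobolev embedding handles the rest: $H^{r,q}\hookrightarrow L^{q_1}$ and $H^{r,q}\hookrightarrow H^{1-s,q_2}$. The final step $H^{1-s,\rho}\hookrightarrow H^{1-s,q}$ fails in general, so instead we embed $H^{1-s+d(\frac1\rho-\frac1q),\rho}\hookrightarrow H^{1-s,q}$. In total the estimate requires $2r-(1-s)=\frac dq$ worth of smoothness, and the numerology is exactly the scaling identity $2r=\frac dq+1-s$. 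Concretely, we take $L^{q_1}$ with $\frac1{q_1}=\frac1q-\frac rd$ and use the product $L^{q_1}\cdot H^{r,q}\to H^{r,\rho}$, followed by $H^{r,\rho}\hookrightarrow H^{1-s,q}$ since $r-\frac d\rho=1-s-\frac dq$. This needs $r\ge 1-s$, i.e.\ $\frac dq\ge 1-s$, which is where $\frac dq>|s-1|$ enters, and $r<\frac dq$ for $q_1<\infty$, i.e.\ $\frac dq>1-s$.
\item If $s>1$, the target has negative smoothness. We use the dual embedding $L^{\rho}\hookrightarrow H^{1-s,q}$ with $\frac d\rho-\frac dq=s-1$ (requiring $\rho>1$), together with H\"older, $\|fg\|_{L^\rho}\le\|f\|_{L^{2\rho}}\|g\|_{L^{2\rho}}$, and $H^{r,q}\hookrightarrow L^{2\rho}$. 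The latter holds because $r-\frac dq=-\frac d{2\rho}$ is precisely $\frac12(1-s-\frac dq)=-\frac12(\frac dq+s-1)$. The conditions $\rho>1$ and $r<\frac dq$ follow from $\frac dq>s-1$ and $\frac dq+s-1<2d$; the latter follows from the upper bound $\frac dq<4\gam-s-1\le 3-s$, since $d\ge2$.
\end{enumerate}
The requirement $\beta<1$ is just $\frac dq<4\gam-s-1$.

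\emph{The term $G$.} Since $\P$ is bounded on $H^{-s+\gam,q}$ and $-s+\gam\le 0$, the embedding $L^q\hookrightarrow H^{-s+\gam,q}$ together with the square-function characterisation $\gamma(\ell^2;L^q)\simeq L^q(\T^d;\ell^2)$ gives
\begin{equation*}
\|G(u)-G(v)\|_{\gamma(\ell^2;X_\half)}\lesssim \|g(\cdot,u)-g(\cdot,v)\|_{L^q(\ell^2)}\lesssim\|u-v\|_{L^q}.
\end{equation*}
It remains to show $X_\beta\hookrightarrow L^q$, i.e.\ $r\ge0$, which holds since $\frac dq>s-1$. This yields the linear part of the bound; in particular $G$ needs no quadratic factor.

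\emph{Main obstacle.} The delicate step is the product estimate for $F$ in case (i). There one must check that the Kato--Ponce integrability exponents in Lemma \ref{lem:paraproduct} stay within $(1,\infty)$, using $\frac dq>1-s$ strictly. The borderline $s=1$ reduces to the plain H\"older bound $L^{2q}\cdot L^{2q}\to L^q$ with $H^{d/(2q),q}\hookrightarrow L^{2q}$, and should be checked separately.
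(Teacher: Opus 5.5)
Your proof is correct, and for $s\le 1$ it takes a different route from the paper's.

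\textbf{How the two arguments for $s\le 1$ differ.} The paper applies Lemma \ref{lem:paraproduct} directly in the target space, at smoothness $1-s$ and integrability $q$. It splits the product as $H^{1-s,q_1}\times L^{q_2}$ and solves a three-equation system for $(\theta,q_1,q_2)$, so that both factors are reached from $H^{\theta,q}$ by Sobolev embedding. You instead apply Lemma \ref{lem:paraproduct} at the source smoothness $r=\frac12(\frac dq+1-s)$ and the lower integrability $\rho$, where $\frac1\rho=\frac2q-\frac rd$. You pair $L^{q_1}$ with $H^{r,q}$, so one factor needs no embedding at all. You then descend with the single embedding $H^{r,\rho}\hookrightarrow H^{1-s,q}$. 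The exponent checks all go through:
\begin{itemize}
\item $\rho\in(1,q)$, from $q>2$ and $r>0$;
\item $q_1<\infty$, from $\frac dq>1-s$;
\item $r-\frac d\rho=1-s-\frac dq$.
\end{itemize}

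\textbf{What your route buys.} Your ``concrete'' argument in (i) never uses $1-s\ge 0$. For $s>1$ one still has $r>0>1-s$, and the same dimension count and the same exponent checks hold. So that single argument covers the whole range $|s-1|<\frac dq$, and the case split is unnecessary. Your case (ii), which coincides with the paper's $s\ge1$ argument, is therefore redundant, though correct. The paper's version keeps the product in the target space $H^{1-s,q}$. The cost is that both factors must be embedded and the regimes $s<1$ and $s\ge1$ must be treated separately.

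\textbf{Cosmetic points.}
\begin{itemize}
\item Delete the false start at the beginning of (i): the $H^{1-s,\rho}$ detour and the undefined $q_2$. Only the ``concrete'' argument is needed.
\item In (ii), the condition for $\rho>1$ is $\frac dq+s-1<d$, not $<2d$. Your bound $\frac dq+s-1<2\le d$ gives exactly the correct condition.
\item The separate check of $s=1$ is not needed, since (i) already includes it.
\end{itemize}

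Your treatment of $G$ is the same as the paper's: the ideal property of $\gamma(\ell^2;\cdot)$ applied to $L^q\hookrightarrow H^{-s+\gam,q}$, the square-function identification, and $X_\beta\hookrightarrow L^q$.
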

\begin{proof}
    From the conditions on $\gam, s$ and $q$ one can verify that $\beta\in(\frac 12,1)$. Using properties of the Helmholtz projection and the $\gam$-radonifying operators \cite[Proposition 9.3.2]{HNVW17} (which also holds for Bessel potential spaces by lifting), we have
    \begin{equation}\label{eq:est1}
            \begin{aligned}
        \|F(u)-&F(v)\|_{X_0}+\|G(u)-G(v)\|_{\gam(\ell^2; X_\half)}\\
        &\lesssim \|u\otimes u-v\otimes v\|_{H^{-s+1,q}} + \|g(\cdot, u)-g(\cdot, v)\|_{H^{-s+\gam,q}(\ell^2(\NN_1; \RRd))}.
    \end{aligned}
    \end{equation}
    
    \textit{Step 1: estimate for the deterministic nonlinearity.} To further estimate the deterministic nonlinearity $F$ in \eqref{eq:est1}, we will distinguish two cases: $s\geq 1$ and $s<1$.
    
    \textit{The case $s\geq 1$.} First consider the term for $F$. By the Sobolev embedding twice, Hölder's inequality and the identity $u\otimes u-v\otimes v= \frac 12 [(u+v)\otimes (u-v)+ (u-v)\otimes (u+v)],$ we obtain
    \begin{align*}
        \|u\otimes u-v\otimes v\|_{H^{-s+1,q}}&\lesssim \|u\otimes u-v\otimes v\|_{L^\lambda}\\
        &\lesssim \|u+ v\|_{L^{2\lambda}}\|u-v\|_{L^{2\lambda}}\\
        & \lesssim(\|u\|_{L^{2\lambda}}+ \|v\|_{L^{2\lambda}})\|u-v\|_{L^{2\lambda}}\\
         &\lesssim(\|u\|_{H^{\theta,q}}+ \|v\|_{H^{\theta,q}})\|u-v\|_{H^{\theta,q}},
    \end{align*}
    where for application of the Sobolev embeddings $L^\lambda\hookrightarrow H^{-s+1,q}$ and $H^{\theta,q}\hookrightarrow L^{2\lambda}$ we need
    \begin{align}\label{eq:value_theta}
        s\geq 1, \quad-\frac{d}{\lambda}=-s+1-\frac{d}{q},\quad \theta\geq 0 \quad\text{ and }\quad \theta - \frac{d}{q} = -\frac{d}{2\lambda}.
    \end{align}
    Note that the second condition determines $\lambda$ and thus the fourth condition gives an expression for $\theta$:
    \begin{equation*}
        \lambda= \frac{dq}{(s-1)q+d}\quad \text{ and }\quad \theta = \frac{d}{q}-\frac{d}{2\lambda} = \frac{d}{2q}-\half(s-1).
    \end{equation*}
    Since $s-1< d/q$ by assumption it follows that $\lambda>1$ (which justifies the application of Hölder's inequality) and $\theta\geq 0$. The desired estimate now follows using $\theta=-s+2\gam\beta$. 

    \textit{The case $s< 1$.} As $s < 1$, one can no longer use a Sobolev embedding $L^\lambda \hookrightarrow H^{-s+1,q}$. Instead, we shall use the paraproduct estimate of Lemma \ref{lem:paraproduct}, obtaining that for any $q_1,q_2 \in (1,\infty)$ with $\frac{1}{q_1} + \frac{1}{q_2} = \frac{1}{q}$,
    $$\|u\otimes u - v\otimes v\|_{H^{-s+1,q}} \lesssim \norm{u+v}_{H^{-s+1,q_1}}\norm{u - v}_{L^{q_2}} + \norm{u-v}_{H^{-s+1,q_1}}\norm{u + v}_{L^{q_2}}.$$
    Our goal is to find the optimal set of parameters $(\theta, q_1, q_2)$ such that
    $$H^{\theta, q} \hookrightarrow H^{-s+1,q_1} \qquad \textnormal{and} \qquad H^{\theta, q} \hookrightarrow L^{q_2}.$$
    Then, we will be able to choose $\beta$ such that $X_{\beta} = \mathbb{H}^{\theta,q}$ to conclude the proof. The optimal choices for the three unknowns $\theta, q_1$ and $q_2$ are determined by three equations: one from the conjugate relation of $q_1$ and $q_2$, and the other two from the two Sobolev embeddings. The equations are
    \begin{align*}
        \frac{1}{q_1} + \frac{1}{q_2} &= \frac{1}{q},\qquad 
q_1 = \frac{dq}{d - \left(\theta - (1-s)\right)q}\qquad \text{ and }\qquad
q_2 = \frac{dq}{d - \theta q}.
    \end{align*}
    The solution to this system is given by
    $$\theta = \frac{d+(1-s)q}{2q}, \qquad q_1 = \frac{2dq}{d+(1-s)q} \qquad\text{ and }\qquad  q_2 = \frac{2dq}{d-(1-s)q}.$$
    Note that we also require $0 < \theta - (1-s)$ for the Sobolev embedding of $H^{\theta,q} \hookrightarrow H^{-s+1,q_1}$ to be valid, however this can be rewritten as the condition $q < \frac{d}{1-s}$ which is assumed.\\

    Note that we arrive at the same $\theta$ as in the previous case $s \geq 1$. For a consistency check on the parameters $q_1$ and $q_2$, in the intermediate case $s=1$ we would obtain that $q_1 = q_2 = 2q$ which agrees with the $2\lambda$ from the previous case, and of course corresponds to a simple H\"{o}lder inequality. Beyond facilitating the choice of $\theta$, the values of $q_1$ and $q_2$ are not relevant here, and we choose the same $\beta$ as in the previous case and for the statement of the lemma, prescribed by the relation $\theta=-s+2\gam\beta$. Therefore,
    \begin{align*}
        \|u\otimes u-v\otimes v\|_{H^{-s+1,q}} &\lesssim \norm{u+v}_{H^{-s+1,q_1}}\norm{u - v}_{L^{q_2}} + \norm{u-v}_{H^{-s+1,q_1}}\norm{u + v}_{L^{q_2}}\\
        &\lesssim \norm{u+v}_{H^{\theta,q}}\norm{u - v}_{H^{\theta,q}}\\
        &\lesssim \left(\norm{u}_{H^{\theta,q}} + \norm{v}_{H^{\theta,q}}\right)\norm{u - v}_{H^{\theta,q}}\\
        &= (\norm{u}_{X_{\beta}} + \norm{v}_{X_{\beta}})\norm{u - v}_{X_{\beta}}
    \end{align*}
    as required.

\textit{Step 2: estimate for the stochastic nonlinearity.} It remains to further estimate the stochastic nonlinearity $G$ in \eqref{eq:est1}. Using $s\geq \gam$ and the Lipschitz condition on $g$ (see Assumption \ref{ass:X0X1_g}), we obtain
    \begin{align*}
        \|g(\cdot, u)-g(\cdot, v)\|_{H^{-s+\gam,q}(\ell^2)}&\lesssim \|g(\cdot, u)-g(\cdot, v)\|_{L^q(\ell^2)}\\
        &\lesssim \|\, |u-v|\, \|_{L^q}\\
        &\lesssim \|u-v\|_{H^{\theta, q}},
    \end{align*}
    for all $\theta\geq 0$. In particular, we can take $\theta$ as in \eqref{eq:value_theta} to identify the $H^{\theta, q}$-norm with the $X_\beta$-norm.
\end{proof}

To apply the abstract theory of stochastic maximal regularity in \cite{AV22, AV22_part2, AV25_survey}, we need the following property of the linear deterministic operator. In particular, the boundedness of the $\Hinf$-calculus for linear differential operators provides a sufficient condition for stochastic maximal regularity for linear parabolic stochastic equations. For more details on the $\Hinf$-calculus, we refer to \cite[Chapter 10]{HNVW17}.
\begin{lemma}\label{lem:calculus}
Let $\gam>0$, $s\in\RR$ and $q\in(1,\infty)$. Then $(-\del)^\gam$ on $H^{-s,q}(\T^d;\RRd)$ with $D((-\del)^\gam):= H^{-s+2\gam,q}(\T^d;\RRd)$ has a bounded $\Hinf$-calculus of angle $\om_{\Hinf}((-\del)^\gam)=0$.
\end{lemma}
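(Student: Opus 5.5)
The plan is to exploit that on the torus $(-\del)^\gam$ is a Fourier multiplier. First we reduce to a single space. The lift $J^{-s}:=(1-\del)^{-s/2}$ is an isomorphism from $H^{-s,q}(\T^d;\RRd)$ onto $L^q(\T^d;\RRd)$. It commutes with $(-\del)^\gam$ and maps $H^{-s+2\gam,q}$ onto $H^{2\gam,q}$. The bounded $\Hinf$-calculus and its angle are invariant under similarity, so it suffices to treat $A:=(-\del)^\gam$ on $L^q(\T^d)$ with domain $H^{2\gam,q}(\T^d)$. Since the operator acts componentwise, we may also take scalar-valued functions.

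Next we deal with the zero mode, because $A$ has the eigenvalue $0$ on constants. We split $L^q=\mathrm{span}\{1\}\oplus L^q_0$, where $L^q_0$ denotes the mean-free functions. This decomposition is complemented by a bounded projection and is invariant under $A$. On the constants, $A$ vanishes. For $\om\in\Sigma_\sigma$ with $\sigma>0$, the resolvent is $\lambda(\lambda+A)^{-1}=\mathrm{diag}(1,\lambda(\lambda+A_0)^{-1})$, and $f(A)=\mathrm{diag}(f(0),f(A_0))$ in the extended calculus. One then checks sectoriality of $A$ directly: $\lambda(\lambda+A)^{-1}$ is uniformly bounded on sectors, since it is bounded on each summand. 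Because $0$ is in the spectrum but isolated, we use the standard framework for sectorial operators that need not be injective, as in \cite[Chapter 10]{HNVW17}, where the calculus is defined on $\overline{R(A)}=L^q_0$. Alternatively, and more robustly, we prove the calculus for $1+A$ or for $A+\eps$ and transfer it. I would in fact verify directly that for every $f\in H^\infty(\Sigma_\sigma)$ the multiplier $m(k)=f(|2\pi k|^{2\gam})$ for $k\neq0$, together with the kernel part, gives a bounded operator.

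The core step is the multiplier estimate on $L^q_0$. Let $\sigma\in(0,\pi)$ and $f\in H^\infty(\Sigma_\sigma)$, and set $m(\xi)=f(|\xi|^{2\gam})$ on $\R^d\setminus\{0\}$. The function $z\mapsto f(z)$ is holomorphic on a sector, so Cauchy's estimates give $|z^j f^{(j)}(z)|\lesssim_{j,\sigma}\|f\|_\infty$ on smaller sectors, in particular on $(0,\infty)$. The chain rule then yields the Mikhlin condition $|\xi|^{|\alpha|}|\d^\alpha m(\xi)|\lesssim\|f\|_{H^\infty}$ for all $|\alpha|\le d+1$, with constants independent of $f$. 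By the Marcinkiewicz–Mikhlin multiplier theorem on $\T^d$, or by transference from $\R^d$ after modifying $m$ near $0$, we get $\|f(A_0)\|_{\mc L(L^q_0)}\lesssim\|f\|_{H^\infty}$ for $q\in(1,\infty)$ (see \cite[Theorem~5.7.11]{HNVW16}). Since $\sigma>0$ is arbitrary, the angle is $0$; it cannot be negative, so $\om_{\Hinf}=0$.

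The main obstacle I expect is the careful handling of the non-injectivity at the zero frequency. This affects both the definition of the $\Hinf$-calculus and the identification of $D(A)$ with $H^{2\gam,q}$. The domain identification follows because $(1+|\xi|^2)^{\gam}(1+|\xi|^{2\gam})^{-1}$ and its inverse are Mikhlin multipliers. If the non-injective setting causes trouble, I would state the result via the invertible shift $1+A$, which has an $\Hinf$-calculus of angle $0$ by the same multiplier argument, and note that the zero mode decouples.
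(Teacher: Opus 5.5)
Your proof is correct, but it takes a different route from the paper's. The paper never works with general symbols. It takes the angle-$0$ bounded $\Hinf$-calculus of $-\Delta$ on $L^q(\mathbb{T}^d)$ and lifts it to $H^{-s,q}$, citing \cite[Lemma~2.6]{LLRV24} and its torus analogue. It then invokes \cite[Proposition~15.2.11]{HNVW24}, which passes a bounded $\Hinf$-calculus to fractional powers with $\omega_{\Hinf}(A^\gamma)\le \gamma\,\omega_{\Hinf}(A)$, so angle $0$ carries over to every $\gamma>0$.

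You instead verify the Mikhlin condition for $f(|\xi|^{2\gamma})$ directly, via Cauchy estimates and Fa\`a di Bruno. This is in effect the composition rule $f(A^\gamma)=(f\circ z^\gamma)(A)$ carried out at the level of symbols, since $f(|\xi|^{2\gamma})=(f\circ z^\gamma)(|\xi|^2)$.

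The paper's route is shorter and modular. Yours is self-contained and settles by hand two points the paper leaves implicit. The first is that the operator has domain exactly $H^{-s+2\gamma,q}$, which your multiplier $(1+|\xi|^2)^{\gamma}(1+|\xi|^{2\gamma})^{-1}$ handles. The second is how the kernel of constants on the torus is treated. The price is a periodic multiplier theorem: Marcinkiewicz, or de Leeuw transference after a cutoff near $0$, which is harmless because $|2\pi k|\geq 2\pi$ for $k\neq 0$.

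One tidy-up. For general $f\in H^\infty(\Sigma_\sigma)$ the value $f(0)$ is meaningless, so drop the formula $f(A)=\mathrm{diag}(f(0),f(A_0))$. Instead, note that for $f\in H^1(\Sigma_\sigma)\cap H^\infty(\Sigma_\sigma)$ the Dunford integral annihilates constants and acts as the multiplier $f(|2\pi k|^{2\gamma})$ on $L^q_0$. That is all the definition in \cite[Chapter~10]{HNVW17} requires, so your fallback via $1+A$ is not needed.
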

\begin{proof} First, let $\gam=1$. Then the result follows from the case $s=0$ and a lifting argument. For details we refer to, e.g., \cite[Lemma 2.6]{LLRV24} where this is proved on $\RR^d$, and the case $\T^d$ can be proved similarly. Finally, for general  $\gam>0$ the result follows by \cite[Proposition 15.2.11]{HNVW24}.
\end{proof}

We proceed with the proof of our main local well-posedness result in Theorem \ref{thm:localWP}. The proof contains the following steps:
\begin{enumerate}[1.]
    \item  Application of \cite[Theorem 4.7]{AV25_survey} to obtain a unique maximal solution.
    \item Regularisation in time via \cite[Theorems 5.6 \& 5.7]{AV25_survey}.
    \item Bootstrapping higher-order integrability in space.
    \item Bootstrapping higher-order smoothness in space.
\end{enumerate}
\begin{proof}[Proof of Theorem \ref{thm:localWP}] 
\textit{Step 1: local well-posedness.} Lemma \ref{lem:calculus} and \cite[Theorem 3.14]{AV25_survey} imply that $(A, 0)\in \SMR_{p,\kappa}^{\bullet}$ for all $p\in(2,\infty)$ and $\kappa\in [0, \frac{p}{2}-1)$. It remains to check \cite[Assumption 4.1]{AV25_survey}. From Lemma \ref{lem:LWP_estimates} it follows that the criticality condition (see \cite[Equation (4.3)]{AV25_survey} with $m=1$, $\rho=1$ and $\beta$ as in Lemma \ref{lem:LWP_estimates}) is satisfied if
\begin{equation}\label{eq:LWP_critical}
    \frac{1+\kappa}{p}\leq 2(1-\beta) = 2- \frac{1}{2\gam}\big(\frac{d}{q}+s+1\big)
\end{equation}
Note that with our choice $\kappa:=-1+\frac{p}{2}(4-\gam^{-1}(\frac{d}{q}+s+1))$ we obtain equality in \eqref{eq:LWP_critical}. Furthermore, the conditions $3\gam -s-1 < \frac dq$ and $\frac{2}{p}+\frac{d}{q\gam}\leq 4-\frac{s+1}{\gam}$ ensure that $$4-\gam^{-1}\big(\frac{d}{q}+s+1\big)\in [2/p, 1).$$ This yields $\kappa\in [0, \frac{p}{2}-1)$.
Now, \cite[Theorem 4.7]{AV25_survey} gives a unique maximal solution $(u, \sigma)$ to \eqref{eq:abstract_eq} with $\sigma>0$ a.s. and the space of initial data given by
\begin{equation}\label{eq:tracespace}
    X_{1-\frac{1+\kappa}{p},p}= \BB^{-s+ 2\gam (1-\frac{1+\kappa}{p})}_{q,p} = \BB^{\frac{d}{q}+1-2\gam}_{q,p},
\end{equation}
where the last equality follows from the choice of $\kappa$.

\textit{Step 2: regularisation in time.} Since $(A, 0)\in \SMR_{p,\kappa}^{\bullet}$ for all $p\in(2,\infty)$ and $\kappa\in [0, \frac{p}{2}-1)$, it follows from \cite[Theorem 5.6]{AV25_survey} (for $\kappa>0$) or \cite[Theorem 5.7]{AV25_survey} (for $\kappa=0$) that the regularity of the solution $u$ obtained in Step 1, improves to
\begin{equation*}
    u\in H^{\theta,r}_\loc((0,\sigma); \Hs^{-s+2\gam(1-\theta),q})\cap C^{\theta -\eps}_\loc((0,\sigma); \Hs^{-s+2\gam(1-\theta),q} ),\quad \theta\in (0,\tfrac{1}{2}), r\in[2,\infty), \eps\in(0,\theta).
\end{equation*}

\textit{Step 3: bootstrapping space integrability.} In this step we will show that 
\begin{equation}\label{eq:resultStep3}
    u\in \bigcap_{r,\zeta\in(2,\infty)}\bigcap_{\lambda\in[0,\half)}H^{\lambda, r}_\loc
((0,\sigma); \Hs^{-s+2\gam(1-\lambda),\zeta}(\T^d;\RRd))\, \text{ a.s.}
\end{equation}
By bootstrapping it suffices to show that there exists a $\delta>0$ such that for every $\zeta\geq q$ the following implication holds
\begin{equation}\label{eq:bootstrap_space_int}
\begin{aligned}
        u\in \bigcap_{r\in(2,\infty)}&\bigcap_{\lambda\in[0,\half)}H^{\lambda, r}_\loc
((0,\sigma); \Hs^{-s+2\gam(1-\lambda),\zeta}) \,\text{ a.s. }\\
&\implies u\in \bigcap_{r\in(2,\infty)}\bigcap_{\lambda\in[0,\half)}H^{\lambda, r}_\loc
((0,\sigma); \Hs^{-s+2\gam(1-\lambda),\zeta+\delta})\,\text{ a.s.}
\end{aligned}
\end{equation}
Let $(\sigma_n)_{n\geq1}$ be a localising sequence for $(u,\sigma)$ and take $\zeta\geq q$. Furthermore, let $t_1\in(0,\infty)$. On the set $\{\sigma_n>t_1\}\times (t_1, \sigma_n)$ we claim that the following estimate holds
\begin{align}\label{eq:claim_estFG}
    \|F(u)\|_{\Hs^{-s,\zeta+\delta}}+ \|G(u)\|_{\Hs^{-s+\gam, \zeta+\delta}(\ell^2)}&\lesssim 1+ \|u\|^2_{\Hs^{-s+2\gam,\zeta}},
\end{align}
for a suitable $\delta$ independent of $\zeta$. To prove the claim, we consider the deterministic and stochastic nonlinearities separately.

\textit{Estimate for $F$ with $s\geq 1$.} If $s\geq 1$, then by Sobolev embedding twice and Hölder's inequality, we obtain
\begin{align*}
     \|F(u)\|_{\Hs^{-s,\zeta+\delta}}&\lesssim \|u\otimes u \|_{H^{-s+1,\zeta+\delta}}
     \lesssim \|u\otimes u \|_{L^\lambda}\\
     &\lesssim \|u\|^2_{L^{2\lambda}}\lesssim \|u\|^2_{H^{-s+2\gam,\zeta}},
\end{align*}
where for application of the Sobolev embeddings $L^\lambda\hookrightarrow H^{-s+1, \zeta+\delta}$ and $H^{-s+2\gam, \zeta}\hookrightarrow L^{2\lambda}$ we need
\begin{equation*}
    s\geq 1, \quad -\frac{d}{\lambda}=-s+1-\frac{d}{\zeta+\delta},\quad  -s+2\gam\geq0\quad \text{ and }\quad -s+2\gam-\frac{d}{\zeta}\geq -\frac{d}{2\lambda}
\end{equation*}
The second condition determines $\lambda= \frac{d(\zeta+\delta)}{(s-1)(\zeta+\delta)+d}$ and thus the fourth condition can be rewritten as
\begin{equation}\label{eq:fourthcondition}
    -s+2\gam-\frac{d}{\zeta}\geq -\half(s-1)-\frac{d}{2(\zeta+\delta)}\quad\iff\quad 4\gam-(s+1)-\frac{2d}{\zeta}\geq -\frac{d}{\zeta+\delta}.
\end{equation}
Set $\delta := \min\{\frac{q^2}{d}(4\gam-(s+1)-\frac{d}{q}), \frac{1}{4d}\}$ and note that $\delta>0 $ since $\frac{d}{q}< 4\gam-(s+1)$ by assumption. 
Using $\zeta\geq q$ and the definition of $\delta$, yields
\begin{align*}
    4\gam-(s+1)-\frac{d}{\zeta}&\geq 4\gam-(s+1)-\frac{d}{q} \geq \frac{d\delta}{q^2}\geq \frac{d\delta}{\zeta(\zeta+\delta)}= -\frac{d}{\zeta+\delta}+\frac{d}{\zeta}.
\end{align*}
Hence, the condition in \eqref{eq:fourthcondition} is satisfied. 

\textit{Estimate for $F$ with $s < 1$.} If $s<1$, then the paraproduct estimate in Lemma \ref{lem:paraproduct} implies
\begin{equation*}
    \|F(u)\|_{H^{-s, \zeta+\delta}}\lesssim \|u\otimes u \|_{H^{-s+1, \zeta+\delta}} \lesssim \|u\|_{L^{r_1}}\|u\|_{H^{-s+1, r_2}},
\end{equation*}
where $\frac 1{r_1}+ \frac 1{r_2}=\frac 1{\zeta+\delta}$. Set $\delta:=\min\{\frac{q^2}{2d}(2\gam-1),\frac{q^2}{2d}( 4\gam-s-1-\frac dq), \frac 1{4d}\}>0$. Then for any $\zeta\geq q$, one can verify that
\begin{equation*}
    \max\Big\{\frac 1\zeta -\frac{2\gam-s}{d}, 0\Big\}+ \max\Big\{\frac 1\zeta -\frac{2\gam-1}{d}, 0\Big\}< \frac{1}{\zeta+\delta}.
\end{equation*}
Hence, we can choose $r_1, r_2> 1$ such that
\begin{equation*}
    \frac 1{r_1}+ \frac 1{r_2}=\frac 1{\zeta+\delta}, \quad \frac 1{r_1}> \max\Big\{\frac 1\zeta -\frac{2\gam-s}{d}, 0\Big\}\quad \text{ and }\quad \frac 1{r_2}>\max\Big\{\frac 1\zeta -\frac{2\gam-1}{d}, 0\Big\}.
\end{equation*}
Therefore, the Sobolev embeddings $H^{-s+2\gam, \zeta} \hookrightarrow L^{r_1}$ and $H^{-s+2\gam,\zeta}\hookrightarrow H^{-s+1,r_2}$ apply and this proves the required estimate.

\textit{Estimate for $G$.} Using the assumption for $g$ in Assumption \ref{ass:X0X1_g} one can prove with an argument similar to the one above for $F$ that 
\begin{equation*}
    \|G(u)\|_{\Hs^{-s+\gam, \zeta+\delta}(\ell^2)}\lesssim 1+\|u\|_{\Hs^{-s+2\gam,\zeta}}.
\end{equation*}
Combining all the estimates above completes the proof of the claim \eqref{eq:claim_estFG}.\\

Note that by the assumption in \eqref{eq:bootstrap_space_int}, the trace embedding \cite[Proposition 2.1(2)]{AV25_survey} and the Sobolev embedding, we have 
\begin{align}\label{eq:IC1}
    \ind_{\{\sigma>t_1\}} u(t_1)\in X_{1-\frac{1}{r},r}=\BB^{-s+2\gam(1-\frac{1}{r})}_{\zeta,r}\subseteq \BB^{-s+2\gam(1-\frac{1+\tilde{\kappa}}{r})}_{\zeta+\delta,r},
\end{align}
where for application of the Sobolev embedding we need $-\frac{d}{\zeta}\geq - \frac{2\gam\tilde{\kappa}}{r}-\frac{d}{\zeta+\delta}$. Set $\tilde{\kappa}=r/4$. Then using $\gam>\half$, $\delta\leq \frac{1}{4d}$ and $\zeta(\zeta+\delta)\geq q^2>1$, gives
\begin{equation*}
    \frac{2\gam\tilde{\kappa}}{r}\geq \frac{\tilde{\kappa}}{r}=\frac{1}{4}\geq d\delta \geq \frac{d\delta}{\zeta(\zeta+\delta)} =-\frac{d}{\zeta+\delta}+ \frac{d}{\zeta},
\end{equation*}
which thus justifies the application of the Sobolev embedding for $\tilde{\kappa}=r/4$. Note that $\tilde{\kappa}<\frac r2 -1 $ only if $r>4$. So to apply stochastic maximal regularity, we have to choose $R>\max\{r,4\}$ and set $\tilde{\kappa}=R/4< R/2-1$, then \eqref{eq:IC1} is also valid with $r$ replaced by $R$. 
Since $(A,0)\in \SMR_{R,\tilde{\kappa}}$ for $Y_0=\Hs^{-s, \zeta+\delta}$ and $Y_1=\Hs^{-s+2\gam,\zeta+\delta}$, the implication \eqref{eq:bootstrap_space_int} follows from stochastic maximal $L^R_{\tilde{\kappa}}$-regularity in \cite[Proposition 3.11]{AV25_survey} with inhomogeneities $F(u)$ and $G(u)$. If $r>2$, then the result follows with standard Sobolev embeddings.

\textit{Step 4: bootstrapping spatial smoothness.} 
We have by Step 3 that a.s. on $\{\sigma_n>t_1\}$
\begin{equation}\label{eq:Step4_reg_u}
    u \in C([t_1,\sigma_n]; \Hs^{-s+2\gam-\eps,\zeta}(\T^d;\RRd)),
\end{equation}  
for all $\eps>0$ and $\zeta\in(2,\infty)$. Indeed, this follows from \eqref{eq:resultStep3} and the Sobolev embedding from $H^{\lambda, r}$ into $C^{\lambda-\frac{1}{r}}$ (see \cite[Theorem 14.7.9, Proposition 14.6.8 \& Corollary 14.4.27]{HNVW24}).

To continue, we will estimate the nonlinearities $F(u)$ and $G(u)$ on $\{\sigma_n>t_1\}\times \Om$ using the obtained regularity of $u$ in \eqref{eq:Step4_reg_u}. Below we assume that $\gam>  (s+1)/3$. For the other case $\half <\gam \leq (s+1)/3$ it suffices to iterate the argument below. Fix $\eps>0$ small enough and $\zeta\in (2,\infty)$ large enough such that 
\begin{equation*}
    -s+2\gam-\eps-\frac{d}{\zeta}>0\quad \text{ and }\quad -s+2\gamma-\eps> -\gam+1.
\end{equation*}
Note that we can always choose $\eps$ and $\zeta$ such that the first condition is satisfied and since $\gam> (s+1)/3$ we can also choose $\eps$ small enough such that the second condition holds. With the choice of these parameters, we have the embeddings
\begin{equation}\label{eq:SobembStep4}
    \Hs^{-s+2\gam-\eps, \zeta}(\T^d;\RRd)\hookrightarrow L^{\infty}(\T^d;\RRd)\quad \text{ and }\quad \Hs^{-s+2\gam-\eps,\zeta}(\T^d;\RRd)\hookrightarrow \Hs^{-\gam+1,\zeta}(\T^d;\RRd).
\end{equation}
By Lemma \ref{lem:paraproduct} (using that $\gam\leq 1$) and \eqref{eq:SobembStep4}, we obtain
\begin{equation*}
    \|F(u)\|_{\Hs^{-\gam,\zeta}}\lesssim \|u\otimes u\|_{H^{-\gam+1,\zeta}}\lesssim \|u\|_{L^\infty}\|u\|_{H^{-\gam+1,\zeta}} \lesssim \|u\|^2_{\Hs^{-s+2\gam-\eps,\zeta}}.
\end{equation*}
The right-hand side is finite by \eqref{eq:Step4_reg_u}. This proves that
\begin{equation*}
    F(u)\in \bigcap_{\zeta\in (2,\infty)} L^\infty(t_1, \sigma_n; \Hs^{-\gam,\zeta})\quad \text{a.s. on }\{\sigma_n >t_1\}.
\end{equation*}
Using the assumptions on $g$ (see Assumption \ref{ass:X0X1_g}) and \eqref{eq:SobembStep4}, we obtain that 
\begin{equation*}
    \|G(u)\|_{\Ls^\zeta(\ell^2)}\lesssim \|g(\cdot, u)\|_{L^\zeta(\ell^2)}\lesssim  1+\|u\|_{L^\infty}
\lesssim 1+ \|u\|_{\Hs^{-s+2\gam-\eps,\zeta}} ,
\end{equation*}
where the right-hand side is finite by \eqref{eq:Step4_reg_u}. This proves that
\begin{equation*}
    G(u)\in \bigcap_{\zeta\in (2,\infty)} L^\infty(t_1, \sigma_n; \Ls^{\zeta}(\T^d;\ell^2(\NN_1;\RRd)))\quad \text{a.s. on }\{\sigma_n >t_1\}.
\end{equation*}
For the initial value we find with \eqref{eq:Step4_reg_u} that
\begin{equation*}
    \ind_{\{\sigma>t_1\}} u(t_1)\in \Hs^{-s+2\gam-\eps,\zeta}\subseteq \mathbb{B}_{\zeta,r}^{-\gam + 2\gam (1-\frac{1+\tilde{\kappa}}{r})},
\end{equation*}
for all $r\in(2,\infty)$ and $\tilde{\kappa}\in [0, \frac{r}{2}-1)$ large enough. Indeed, for $\frac{1+\tilde{\kappa}}{r}$ sufficiently close to $\half$, we find that $-\gam+2\gam(1-\frac{1+\tilde{\kappa}}{r})$ is arbitrarily close to zero and thus $-s+2\gam-\eps > -\gam+2\gam(1-\frac{1+\tilde{\kappa}}{r})$ since $-s+2\gam-\eps>0$. This embedding of the Bessel potential space into the Besov space then follows from \cite[Theorems 14.7.9, 14.6.14(i) \& Proposition 14.6.8]{HNVW24}.

Since $(A,0)\in \SMR_{r,\tilde{\kappa}}$ for $Y_0=\Hs^{-\gam, \zeta}$ and $Y_1=\Hs^{\gam,\zeta}$, the desired regularity
\begin{equation*}
    u\in L^r_\loc((0,\sigma); \Hs^{\gam,\zeta})\cap C^{\theta -\eps}_\loc((0,\sigma); \Hs^{-\gam+2\gam(1-\theta),\zeta}),
\end{equation*}
with $\theta \in (0,\half)$, $\eps\in (0,\theta)$,  $r,\zeta\in (2,\infty)$ follows from \cite[Proposition 3.11]{AV25_survey} and Sobolev embeddings.
\end{proof}

\section{Global well-posedness in two dimensions} \label{section global WP}
In this section, we prove global well-posedness of the stochastic hypoviscous Navier--Stokes equations on $\T^2$. First, in Section \ref{subsec:roadmap} we will give an overview of the methods used in the rest of this section to prove  global well-posedness. This entails abstract blow-up criteria (Section \ref{subsec:blowupcrit}), a priori estimates for the vorticity equation (Section \ref{subsec:vorticity}), and using the Biot--Savart operator to finally prove the global well-posedness result of Theorem \ref{thm:globalWPvelocity} (Section \ref{subsec:proofglobal}).

\subsection{Roadmap to global well-posedness}\label{subsec:roadmap}

We explain the key ideas behind our approach to proving global well-posedness for the stochastic hypoviscous Navier--Stokes equations in two dimensions. 

\subsubsection{Blow-up criteria} Relying on the theory for nonlinear stochastic equations from \cite{AV25_survey}, it suffices to verify certain blow-up criteria to obtain global well-posedness. We refer to \cite[Section 5]{AV25_survey} for an elaborate discussion on blow-up criteria. To verify such blow-up criteria, one typically has to prove certain energy estimates for the local maximal solution $(u, \sigma)$. 

In Section \ref{subsec:blowupcrit} we collect the required blow-up criteria and prove that these are actually independent of the parameters $p,q$ and $s$ using the regularisation results from Section \ref{sec:LWP}. This shows that, to establish
global well-posedness, it suffices to verify that for every $t_0 > 0$, the maximal solution $(u, \sigma)$ satisfies 
\begin{equation*}
    \sup_{t\in [t_0, \sigma)} \|u(t)\|_{\mathbb{B}^{\beta_0}_{q_1, \infty}}<\infty
\end{equation*}
whenever $ t_0<\sigma<T$, for some $q_1>q$ large enough and $\beta_0:= \frac{d}{q}+1-2\gam$.  

To derive estimates in $\BB^{d/q+1-2\gam}_{q_1,\infty}$, it is natural to look for sufficiently large $\zeta$ such that $L^\zeta \subseteq B^{d/q+1-2\gam}_{q_1,\infty}$ and work in the more favourable $L^\zeta$ space. Note that when the Sobolev index $d/q+1-2\gam$ is negative, we necessarily have that
$L^{q_1} \subseteq B^{d/q+1-2\gam}_{q_1,\infty} $,
hence $L^\zeta \subseteq B^{d/q+1-2\gam}_{q_1,\infty}$ for any $q_1 \leq \zeta$. The condition $d/q+1-2\gam<0$ is equivalent to $d/q< 2\gam-1$, which corresponds to the limiting case $s = 2\gamma$ in the conditions of Theorem \ref{thm:localWP}. To obtain a non-empty set of parameters, we need to impose the condition
\begin{equation*}
    \max\{|s-1|, 3\gam-s-1\}< 2\gam-1.
\end{equation*}
Obviously, the condition $3\gam-s-1<2\gam -1$ leads to $s>\gam$, thus removing the endpoint from the condition on $s$ in Theorem \ref{thm:localWP}. On the other hand, the condition $|s-1|<2\gam-1$ is equivalent to $2-2\gam<s<2\gam$. Hence, we are left with the following intervals for $s$:
\begin{equation*}
    \begin{cases}
        2-2\gam<s<2\gam &\mbox{ if } \half < \gam< \frac 23,\\
        \gam<s<2\gam&\mbox{ if } \gam\geq \frac 23.
    \end{cases}
\end{equation*}


\subsubsection{Energy estimates for the vorticity equation}
We have established a non-empty set of parameters for which the local well-posedness of Theorem \ref{thm:localWP} applies, as does the embedding $L^{\zeta} \subseteq B^{d/q+1-2\gam}_{q_1,\infty}$. Thus, to demonstrate non-explosion of the local solution, we can try to prove $L^\zeta$-estimates. Unfortunately after all of this motivation for considering $L^\zeta$-estimates, we do not have a clear way of obtaining them for the equation \eqref{eq:abstract_eq} directly. The issue is, unsurprisingly, the nonlinear term; applying an It\^{o} formula for the $L^\zeta$-norm, we must deal with a term
$$\big\langle\P (u \cdot \nabla)u,u\abs{u}^{\zeta-2}\big\rangle_{L^2}$$
where $\abs{u}$ is the Euclidean norm of $u$. Whilst we enjoy a cancellation without the Helmholtz projection,
$$\big\langle(u \cdot \nabla)u, u\abs{u}^{\zeta-2}\big\rangle_{L^2} = 0,$$
or observe a cancellation in the case $\zeta=2$,
$$\inner{\P (u \cdot \nabla)u}{u}_{L^2} = \inner{ (u \cdot \nabla)u}{u}_{L^2} = 0,$$
we do not in general have that $\P$ is the identity on $u\abs{u}^{\zeta-2}$ which prevents us from obtaining a cancellation and closing the estimates. In two dimensions however, one can instead work with the vorticity form of the equation where the Helmholtz projection is not present. Applying the curl operator $\nabla \times$, which on a two-dimensional vector field $f$ produces a scalar defined by $$ \nabla \times f = \partial_1f^2 - \partial_2f^1,$$
gives, at least formally,
the following vorticity form of equation \eqref{eq:abstract_eq}:
\begin{equation} \label{eq: firstvorticity} \dd \xi + (-\del)^\gam \xi \dd t  = -(u \cdot \grad)\xi \dd t + \nabla \times \left[ G(u)\right] \dd W_{\ell^2}\end{equation}
where $\xi = \nabla \times u$, see for example \cite[Chapter 12]{robinson2016three}. In this form, the nonlinear term does not present any issue since
$$\big\langle (u \cdot \nabla)\xi,\xi\abs{\xi}^{\zeta-2}\big\rangle_{L^2} = 0$$
so our hopes for non-explosion lie in energy estimates of the vorticity form.

\subsubsection{Global well-posedness for the velocity equation}
Still, it is not clear how we can use the vorticity estimates in the context of the unique maximal solution $(u,\sigma)$ to \eqref{eq:abstract_eq}. Even with the instantaneous regularisation established in Theorem \ref{thm:localWP}, the solution is not regular enough for $\xi \in L^q$. Therefore we will not simply be taking the curl of $u$, but rather working with the vorticity equation from the ground up and translating the regularity to $u$. To this end, we need to establish a closed-form vorticity equation. One obstruction in equation \eqref{eq: firstvorticity} is the structure of the noise. For a clean expression, we assume that the operators $g_n$ comprising $G$ are linear of the form
$$g_n(x,u) = \alpha_n(x)u\quad \text{ with }(\alpha_n)_{n\geq 1}\in \ell^2(W^{1,\infty}(\T^2)).$$
Then we can explicitly compute
\begin{equation} \label{explicitly compute}\nabla \times \left[ \P g_n(\cdot,u)\right] = \alpha_n\xi + (\nabla \alpha_n) \times u.\end{equation}
Furthermore we need to prescribe $u$ as a function of $\xi$, where we recall that formally $\xi=\grad \times u$. 
 This can be achieved through the \emph{Biot--Savart operator} which we denote $\BS$. We refer to \cite{bertozzi2002vorticity} for more details on this operator, which acts as an inverse to the curl. This leads to a closed-form expression of the vorticity equation
 \begin{equation*} 
\dd \xi + (-\del)^\gam \xi \dd t  = -(\BS\xi \cdot \grad)\xi \dd t + \sum_{n \geq 1} \left(\alpha_n\xi + (\nabla \alpha_n) \times \BS\xi\right)\dd W^n_t.\end{equation*}
We prove global well-posedness for this
equation and then a global solution of the stochastic hypoviscous Navier--Stokes equation in velocity form is constructed using $u=\BS \xi$.

\subsection{Blow-up criteria}\label{subsec:blowupcrit}

As an immediate consequence of \cite[Theorems 5.1 \& 5.2]{AV25_survey}, we get the following criteria to verify global well-posedness. We note that the blow-up criteria in this section actually hold for arbitrary dimensions, while for verifying the blow-up criteria later on we restrict ourselves to $d=2$.

\begin{theorem}[Blow-up criteria for the critical setting]\label{thm:blow-up_crit_critical}
Suppose that the conditions of Theorem \ref{thm:localWP} hold, and let $(u, \sigma)$ be the $L^p_\kappa$-maximal solution to \eqref{eq:abstract_eq}. Then
\begin{enumerate}
    \item[(1)] $\mathbf{P}(\sigma < \infty, \lim\limits_{t \uparrow \sigma} u(t) \text{ \textit{exists in} } X_{1 - \frac{1+\kappa}{p}, p}) = 0;$
    \item[(2)] $\mathbf{P}(\sigma < \infty, \sup\limits_{t \in [0, \sigma)} \|u(t)\|_{X_{1 - \frac{1+\kappa}{p}, p}} + \|u\|_{L^p(0, \sigma; X_{1 - \frac{\kappa}{p}})} < \infty) = 0.$
\end{enumerate}
\end{theorem}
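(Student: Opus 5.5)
The plan is to obtain both criteria directly from the abstract blow-up results \cite[Theorems 5.1 \& 5.2]{AV25_survey}. To do so, we check that the hypotheses used for local well-posedness in the proof of Theorem \ref{thm:localWP} are exactly the hypotheses those theorems require. Criterion (1) is the general statement that the solution cannot converge in the trace space at a finite blow-up time. Criterion (2) is the refined criterion for the \emph{critical} case. Both rely on the same ingredients as \cite[Theorem 4.7]{AV25_survey}, namely:
\begin{enumerate}[(a)]
\item stochastic maximal regularity $(A,0)\in\SMR^{\bullet}_{p,\kappa}$, which follows from Lemma \ref{lem:calculus} and \cite[Theorem 3.14]{AV25_survey};
\item the locally Lipschitz estimates on $F$ and $G$ of Lemma \ref{lem:LWP_estimates}, in the form required by \cite[Assumption 4.1]{AV25_survey};
\item the (critical) relation \eqref{eq:LWP_critical} between $\beta$, $p$ and $\kappa$.
\end{enumerate}

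Concretely, I would carry out three steps.

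First, recall from Step 1 of the proof of Theorem \ref{thm:localWP} that $(A,0)\in \SMR^{\bullet}_{p,\kappa}$ on $X_0=\Hs^{-s,q}$, with $X_1=\Hs^{-s+2\gam,q}$, and with $\kappa\in[0,\tfrac p2-1)$.

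Second, note that $F$ is bilinear with the estimate of Lemma \ref{lem:LWP_estimates} for a single $\beta=\beta_1\in(\frac12,1)$ and $\rho_1=1$. The map $G$ is globally Lipschitz from $X_\beta$ into $\gam(\ell^2;X_{1/2})$, so it falls under the ``linear growth'' part of the assumption with $\beta=\varphi$. Since $\kappa$ was chosen to give equality in \eqref{eq:LWP_critical}, the problem is in the critical situation. Therefore \cite[Theorem 5.2]{AV25_survey}, which gives the critical-case criterion with the extra $L^p(0,\sigma;X_{1-\kappa/p})$ term, applies. \cite[Theorem 5.1]{AV25_survey}, which yields (1), needs no criticality restriction and applies in any case.

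Third, identify the abstract spaces. The trace space is $X_{1-\frac{1+\kappa}{p},p}=\BB^{\frac dq+1-2\gam}_{q,p}$ by \eqref{eq:tracespace}. The space $X_{1-\kappa/p}$ is $\Hs^{-s+2\gam(1-\kappa/p),q}$. Substituting these yields statements (1) and (2).

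The main obstacle is checking the side conditions in \cite[Theorem 5.2]{AV25_survey}, which can be more restrictive than those for local existence. The point to verify is that (2) may require $\kappa>0$, or $p>2$, or the strict inequality $\beta<1-\frac{1+\kappa}{p}\cdot\frac1{?}$-type conditions on $\beta_j$ relative to the trace exponent. Here $p>2$ is assumed, and the criticality equality is precisely the case that theorem treats. If the theorem distinguishes $\kappa=0$ from $\kappa>0$, I would handle the case $\kappa=0$ separately. There I would verify that the required condition reduces to $\frac{2}{p}+\frac{d}{q\gam}\le 4-\frac{s+1}{\gam}$, which is part of the hypotheses of Theorem \ref{thm:localWP}.
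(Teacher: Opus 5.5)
Your proposal is correct and matches the paper, which states this theorem as an immediate consequence of \cite[Theorems 5.1 \& 5.2]{AV25_survey}, once $(A,0)\in\SMR^{\bullet}_{p,\kappa}$, the estimates of Lemma \ref{lem:LWP_estimates} and the critical equality in \eqref{eq:LWP_critical} are available and the trace space is identified via \eqref{eq:tracespace}. The side conditions you worry about are harmless: equality in \eqref{eq:LWP_critical} gives $1-\frac{1+\kappa}{p}=2\beta-1<\beta<1$, and $(A,0)\in\SMR^{\bullet}_{p,\kappa}$ holds for every $\kappa\in[0,\frac p2-1)$ (including $\kappa=0$), so no case distinction is needed.
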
 

\begin{theorem}[Blow-up criteria for the non-critical setting]\label{thm:blow-up_crit_noncritical}
Suppose that the conditions of Theorem \ref{thm:localWP} hold (with non-critical $\kappa$), and let $(u, \sigma)$ be the $L^p_\kappa$-maximal solution to \eqref{eq:abstract_eq}. Then
\begin{equation*}
    \mathbf{P}(\sigma < \infty,\sup\limits_{t \in [0, \sigma)} \|u(t)\|_{X_{1 - \frac{1+\kappa}{p}, p}}<\infty) = 0,
\end{equation*}
if $(p,\kappa)$ is non-critical

\end{theorem}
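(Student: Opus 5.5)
The statement to prove is Theorem \ref{thm:blow-up_crit_noncritical}. I would obtain it directly from the abstract blow-up criterion for non-critical spaces in \cite[Theorem 5.2]{AV25_survey} (the non-critical counterpart of \cite[Theorem 5.1]{AV25_survey}, which gives Theorem \ref{thm:blow-up_crit_critical}). So the work is to check that the hypotheses of that abstract result hold for \eqref{eq:abstract_eq} in the setting of Theorem \ref{thm:localWP}, now for a pair $(p,\kappa)$ for which \eqref{eq:LWP_critical} holds with strict inequality.

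\emph{Step 1: linear part.} By Lemma \ref{lem:calculus}, $A=(-\del)^\gam$ on $X_0=\Hs^{-s,q}$ with domain $X_1$ has a bounded $\Hinf$-calculus of angle $0$. Hence \cite[Theorem 3.14]{AV25_survey} gives $(A,0)\in\SMR^\bullet_{p,\kappa}$ for every $p\in(2,\infty)$ and $\kappa\in[0,\frac p2-1)$, exactly as in Step 1 of the proof of Theorem \ref{thm:localWP}.

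\emph{Step 2: nonlinear part.} I would check \cite[Assumption 4.1]{AV25_survey} using Lemma \ref{lem:LWP_estimates}. This lemma gives the local Lipschitz estimate with $m=1$, $\rho=1$ and $\beta=\frac{1}{4\gam}(\frac dq+s+1)\in(\frac12,1)$. The subcriticality requirement is then $\frac{1+\kappa}{p}<2(1-\beta)$, which is precisely what ``non-critical'' means here. The growth of $G$ and the constant term $g(\cdot,0)\in L^\infty$ are covered by the factor $(1+\|u\|_{X_\beta}+\|v\|_{X_\beta})$. One also needs the trace space $X_{1-\frac{1+\kappa}{p},p}$ to make sense as initial-data space: if $\kappa$ is below the critical value, the smoothness $-s+2\gam(1-\frac{1+\kappa}{p})$ lies strictly above $\frac dq+1-2\gam$, so Theorem \ref{thm:localWP}'s framework (local existence and uniqueness via \cite[Theorem 4.7]{AV25_survey}) still applies.

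\emph{Step 3: conclusion.} With these hypotheses, \cite[Theorem 5.2]{AV25_survey} states that in the non-critical case, boundedness of $\sup_{t<\sigma}\|u(t)\|_{X_{1-\frac{1+\kappa}{p},p}}$ alone rules out $\sigma<\infty$ almost surely. No $L^p(X_{1-\kappa/p})$-control is needed, unlike part (2) of Theorem \ref{thm:blow-up_crit_critical}.

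The only genuinely delicate point is Step 2's bookkeeping for admissibility. The abstract theorem requires $\kappa\in[0,\frac p2-1)$ with strict inequality $\frac{1+\kappa}{p}<2(1-\beta)$, and one must make sure such a pair exists within the parameter constraints of Theorem \ref{thm:localWP}. Since $2(1-\beta)>\frac1p$ there, admissible choices exist, for instance $\kappa$ slightly smaller than the critical $\kappa$ defined in Theorem \ref{thm:localWP}. Everything else is a direct citation.
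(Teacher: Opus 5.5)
Your proposal is correct and follows the paper's route: the paper presents this statement as an immediate consequence of \cite[Theorem 5.2]{AV25_survey}, using the stochastic maximal regularity from Lemma \ref{lem:calculus} and the nonlinear estimates of Lemma \ref{lem:LWP_estimates} already verified in the proof of Theorem \ref{thm:localWP}, now with strict inequality in \eqref{eq:LWP_critical}. Two minor points: the hypotheses of Theorem \ref{thm:localWP} only give $2(1-\beta)\geq \frac1p$, with equality when the critical weight is $0$, but the existence of a non-critical $\kappa$ is assumed in the statement and need not be proved; and if $\kappa$ is so small that $\beta\leq 1-\frac{1+\kappa}{p}$, you can replace $\beta$ by a slightly larger $\beta'$ via $X_{\beta'}\hookrightarrow X_\beta$ without losing strict subcriticality.
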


Using the instantaneous regularisation of the local maximal solution in Theorem \ref{thm:localWP}, we show that the blow-up criteria formulated above are independent of the parameters $p,q, s$ and $\kappa$.  
\begin{theorem}[Parameter independence of the blow-up criteria]\label{thm:indep_parameter_blow-up}
Suppose that the conditions of Theorem \ref{thm:localWP} hold, and let $(u, \sigma)$ be the $(p,q,\kappa_c, s)$-solution to \eqref{eq:abstract_eq}, where $\kappa_c:=-1 + \frac{p}{2}\big(4-\gam^{-1}(\frac{d}{q}+s+1)\big)$. Suppose that $p_0\in (2,\infty)$, $q_0\in (2,\infty)$ and $s_0\in [\gam, 2\gam)$ are such that the assumptions of Theorem \ref{thm:localWP} hold. Set
\begin{equation*}
    \beta_0:=\frac{d}{q_0}+1-2\gam\quad \text{ and }\quad \tilde{s}_0:=\frac{d}{q_0}+1-2\gam\Big(1-\frac{1}{p_0}\Big).
\end{equation*}
Then for all $0<t_0<T<\infty$, we have
\begin{enumerate}[(1)]
    \item\label{it:1} $\mathbf{P}\big(t_0<\sigma < T,\sup\limits_{t \in [t_0, \sigma)} \|u(t)\|_{{\BB^{\beta_0}_{q_1,\infty}(\T^d;\RR^d)}}<\infty\big) = 0$ for all $q_1> q_0$,
    \item\label{it:2} $\mathbf{P}\big(t_0<\sigma < T,\sup\limits_{t \in [t_0, \sigma)} \|u(t)\|_{{\BB^{\beta_0}_{q_0,p_0}(\T^d;\RR^d)}}+\|u\|_{L^{p_0}(t_0,\sigma; H^{\tilde{s}_0,q_0}(\T^d;\RR^d))}<\infty\big) = 0.$
\end{enumerate}
\end{theorem}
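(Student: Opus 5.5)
The plan is to reduce both statements to the blow-up criteria of Theorems \ref{thm:blow-up_crit_critical} and \ref{thm:blow-up_crit_noncritical}, applied not to $(u,\sigma)$ but to a solution of \eqref{eq:abstract_eq} started at time $t_0$ in the $(p_0,q_0,s_0)$ or $(p_1,q_1,s_0)$ setting. Three facts link the two. First, by the instantaneous regularisation in Theorem \ref{thm:localWP}, on $\{\sigma>t_0\}$ the value $u(t_0)$ lies in every $\Hs^{\gam-\eps,\zeta}$ and is $\mathcal F_{t_0}$-measurable. Second, $\Hs^{\gam-\eps,\zeta}$ embeds in all relevant trace spaces $\BB^{d/q_0+1-2\gam}_{q_0,p_0}$ and $\BB^{d/q_1+1-2\gam}_{q_1,p_1}$, because $\gam-\eps> d/q+1-2\gam$ once $\zeta$ is large and $\eps$ small, using $d/q<4\gam-s-1\le 3\gam-1$. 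Third, maximal solutions are unique, and by the regularisation (Step 3--4 type arguments) each one lies in the path spaces of the others after $t_0$. Together these show that the time-shifted $(p_0,q_0,\kappa,s_0)$-maximal solution $(v,\tau)$ with $v(0)=\ind_{\{\sigma>t_0\}}u(t_0)$ coincides with $u(t_0+\cdot)$, with $\tau=\sigma-t_0$ on $\{\sigma>t_0\}$. The shifted problem is posed on the filtration $(\mathcal F_{t_0+t})$ with the shifted Brownian motions.

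To make the identification, I would take $(v,\tau)$ as above and observe that $u(t_0+\cdot)$ is a local $(p_0,q_0,\kappa,s_0)$-solution on $[0,\sigma-t_0)$. This follows because, on $(t_0,\sigma)$, the regularity $u\in L^r_\loc H^{\gam,\zeta}\cap C_\loc(\Hs^{\gam-\eps,\zeta})$ from Theorem \ref{thm:localWP} puts it in $L^{p_0}_{\loc}(X_1^{0})\cap C(X^0_{\rm Tr})$ for the new scale $X^0_j=\Hs^{-s_0+2\gam j,q_0}$. Maximality of $v$ then gives $\tau\ge\sigma-t_0$. The symmetric argument gives the reverse inequality, by applying the same regularisation to $v$ and using maximality of $(u,\sigma)$ after concatenating with $u$ on $[0,t_0]$. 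Hence $\tau=\sigma-t_0$ a.s. on $\{\sigma>t_0\}$.

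With this identification, item \ref{it:2} is the critical criterion, Theorem \ref{thm:blow-up_crit_critical}(2), for $v$ with $\kappa=\kappa_{c}(p_0,q_0,s_0)$. Indeed $X^0_{1-\frac{1+\kappa}{p_0},p_0}=\BB^{\beta_0}_{q_0,p_0}$ and $X^0_{1-\kappa/p_0}=\Hs^{-s_0+2\gam-2\gam\kappa/p_0,q_0}$, and a direct computation gives $-s_0+2\gam-2\gam\kappa/p_0=\tilde s_0$. The event $\{t_0<\sigma<T,\dots\}$ is contained in $\{\tau<\infty,\dots<\infty\}$.

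For item \ref{it:1}, I would use the non-critical criterion, Theorem \ref{thm:blow-up_crit_noncritical}, in the scale $X^1_j=\Hs^{-s_0+2\gam j,q_1}$. The Besov index and space are fixed, and the aim is to choose $p_1$ and $\kappa_1$ so that the trace space satisfies
\[
\BB^{\beta_0}_{q_1,\infty}\hookrightarrow \BB^{-s_0+2\gam(1-\frac{1+\kappa_1}{p_1})}_{q_1,p_1}.
\]
This holds by lowering smoothness slightly, provided
\[
-s_0+2\gam\Bigl(1-\frac{1+\kappa_1}{p_1}\Bigr)<\beta_0=\frac{d}{q_0}+1-2\gam .
\]
Since $d/q_1<d/q_0$, the critical weight for $q_1$, given by $-s_0+2\gam(1-\frac{1+\kappa_c}{p_1})=d/q_1+1-2\gam$, lies strictly below $\beta_0$. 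So I would pick $\kappa_1$ slightly smaller than $\kappa_c(p_1,q_1,s_0)$, which is non-critical, so that the trace index lies in $(d/q_1+1-2\gam,\beta_0)$. Here $p_1$ is taken large so that $\kappa_1\in[0,p_1/2-1)$. The conditions of Theorem \ref{thm:localWP} for $(p_1,q_1,s_0)$ must also hold: increasing $q$ preserves the upper bound $d/q<4\gam-s_0-1$, and the lower bound $\max\{|s_0-1|,3\gam-s_0-1\}<d/q_1$ is kept by taking $q_1$ only slightly larger than $q_0$. For larger $q_1$, one first embeds $\BB^{\beta_0}_{q_1,\infty}\hookrightarrow\BB^{\beta_0}_{q_1',\infty}$ with $q_0<q_1'<q_1$ on the torus. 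Identifying $u(t_0+\cdot)$ with the $(p_1,q_1,\kappa_1,s_0)$-maximal solution as before, the sup bound gives a bounded trace norm, so that event is null.

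I expect the main obstacle to be the identification step, namely the rigorous equality of maximal solutions across different parameter settings. This needs the regularity of each solution to place it in the other's local-solution class, including near $t=0$ for the restarted problem, together with care about the shifted filtration. The parameter bookkeeping in item \ref{it:1} is routine by comparison.
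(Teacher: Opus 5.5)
Your proposal is correct and follows essentially the same route as the paper. You restart at $t_0$ from $u(t_0)$ on $\{\sigma>t_0\}$, which lies in the new trace spaces by instantaneous regularisation because $\beta_0<\gamma$. You identify the restarted maximal solution with $u$, then apply the critical criterion for item (2), and for item (1) the non-critical criterion with an intermediate integrability in $(q_0,q_1)$ and a weight strictly between the two critical weights.

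The one deviation is in proving $\sigma\geq\tau$ on $\{\sigma>t_0\}$. The paper applies the critical blow-up criterion for $(u,\sigma)$ on $\{t_0<\sigma<\tau\}$, whereas you concatenate and invoke maximality of $(u,\sigma)$. Your version is equally valid once you use $u=v$ near $t_0$ from the first inclusion. Your extra freedom in $p_1$ is harmless but unnecessary, since keeping $p_0$ already works.
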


\begin{proof}
    We follow the proof of \cite[Theorem 2.10]{AV23}.

    \textit{Proof of \ref{it:2}}. We start with some preparations. Fix $0<t_0<T<\infty$ and let $\kappa_c:=-1 + \frac{p}{2}\big(4-\gam^{-1}(\frac{d}{q}+s+1)\big)$ as in Theorem \ref{thm:localWP}. In addition, let $(u, \sigma)$ be the $(p,q, \kappa_c,s)$-solution to \eqref{eq:abstract_eq} provided by Theorem \ref{thm:localWP}. Then Theorem \ref{thm:blow-up_crit_critical}(2) together with \eqref{eq:tracespace} implies that
    \begin{equation}\label{eq:para_indep1}
        \mathbf{P}\big(\sigma<\infty,\sup\limits_{t \in [0, \sigma)} \|u(t)\|_{\BB^{\frac{d}{q}+1-2\gam}_{q,p}(\T^d; \RR^d)} +  \|u\|_{L^p(0, \sigma; \Hs^{\tilde{s},q}(\T^d; \RR^d))} < \infty\big)=0,
    \end{equation}
    where, using the expression for $\kappa_c$, we have
    \begin{equation*}
       \tilde{s}:= -s+2\gam \big(1-\frac{\kappa_c}{p}\big)= \frac{d}{q}+1 -2\gam\big(1-\frac{1}{p}\big).
    \end{equation*}
    Let $\theta_c:= \frac{\kappa_c}{p} < \half - \frac{1}{p}$, then by Theorem \ref{thm:localWP} and weighted Sobolev embeddings (\cite[Proposition 2.7]{AV22}), we find
    \begin{equation}\label{eq:u_instan}
        u\in H^{\theta_c, p}_{\loc}([0,\sigma), w_{\kappa_c}; \Hs^{-s+2\gam(1-\theta_c),q}(\T^d; \RR^d))\hookrightarrow L^p_{\loc}([0,\sigma); \Hs^{ \tilde{s}, q}(\T^d; \RR^d)).
    \end{equation}
    Indeed, we can apply the weighted Sobolev embedding since by definition of $\theta_c$, we have
    \begin{equation*}
        \theta_c-\frac{\kappa_c+1}{p}\geq -\frac{1}{p}. 
    \end{equation*}
By the instantaneous regularisation in Theorem \ref{thm:localWP} and the progressive measurability of $u$, we have for any $\lambda\in (0,\gam)$ that
\begin{equation}\label{eq:para_indep2}
    \ind_{\{\sigma>t_0\}}u(t_0)\in L^0_{\mathcal{F}_{t_0}}(\Omega; C^{\lambda }(\T^d; \RR^d)).
\end{equation}
Let $p_0\in (2,\infty)$, $q_0\in (2,\infty)$ and $s_0\in [\gam, 2\gam)$ be such that the assumptions of Theorem \ref{thm:localWP} hold with these parameters. In addition, define the critical weight
\begin{equation}\label{eq:kappa0}
    \kappa_0:= \kappa_{c,0}:= -1  + \frac{p_0}{2}\big(4-\gam^{-1}\big(\frac{d}{q_0}+s_0+1\big)\big),
\end{equation}
and the critical space for this setting is $\BB^{\beta_0}_{q_0,p_0}(\T^d;\RRd)$ with $\beta_0:=\frac{d}{q_0}+1-2\gam$. In particular, we note that by assumption $q_0> \frac{d}{4\gam-s_0-1}$ (see Theorem \ref{thm:localWP}) and $s_0\geq \gam$, hence it follows that
$\frac{d}{q_0}< 3\gam-1$ and 
\begin{equation}\label{eq:beta0}
    \beta_0 = \frac{d}{q_0}+1-2\gam< (3\gam-1)+1-2\gam = \gam.
\end{equation}
Because $\beta_0<\gam$, we can take $\lambda\in (\max\{0, \beta_0\}, \gam)$ and for such $\lambda$ it holds that $C^\lambda=B^\lambda_{\infty,\infty}\hookrightarrow B^{\beta_0}_{q_0, p_0}$. Therefore, \eqref{eq:para_indep2} implies 
\begin{equation}\label{eq:para_indep3}
    \ind_{\{\sigma>t_0\}}u(t_0)\in L^0_{\mathcal{F}_{t_0}}(\Omega; \BB^{\beta_0}_{q_0, p_0}(\T^d; \RR^d)).
\end{equation}
Up to a shift of the time variable, \eqref{eq:para_indep3} and the local well-posedness from Theorem \ref{thm:localWP} imply the existence of a $(p_0,q_0, \kappa_0, s_0)$-maximal solution $(v, \tau)$ on $[t_0, \infty)$ of \eqref{eq:abstract_eq}, i.e.,
    \begin{equation}\label{eq:v}
       \left\{
        \begin{aligned}
            &\dd v + (-\del)^\gam v \dd t = -\P [\grad \cdot (v\otimes v)]\dd t + (\P[g_n(\cdot, v)])_{n\geq1}\dd W_{\ell^2}&\text{ for }&t\geq t_0,\\
            &v|_{t=t_0}=\ind_{\{\sigma>t_0\}}u(t_0),
        \end{aligned}
        \right.
\end{equation}
Moreover, this solution $(v,\tau)$ instantaneously regularises in time and space:
     \begin{equation}\label{eq:instan_v}
    v\in H^{\theta,r}_\loc((t_0,\tau); \Hs^{-\gam+2\gam(1-\theta),\zeta}(\T^d;\RRd)) \quad \text{a.s.},
\end{equation}
for $\theta \in [0,\half)$,  $r,\zeta\in (2,\infty)$. Thus by Theorem \ref{thm:blow-up_crit_critical} applied to \eqref{eq:v}, we have
\begin{equation}\label{eq:para_indep4}
    \mathbf{P}\big(\tau < \infty,\sup\limits_{t \in [t_0, \tau)} \|v(t)\|_{{\BB^{\beta_0}_{q_0,p_0}(\T^d;\RR^d)}}+\|v\|_{L^{p_0}(t_0, \tau; \Hs^{\tilde{s}_0,q_0}(\T^d;\RR^d))}<\infty\big) = 0.
\end{equation}
Let $\mathcal{V}:=\{\sigma>t_0\}$. Since $\tau>t_0$ almost surely, \eqref{eq:para_indep4} implies \ref{it:2} in the statement of the theorem if
\begin{equation}\label{eq:claimblowup}
    \tau =\sigma \text{ a.s. on }\mathcal{V}\quad \text{ and }\quad u=v \text{ a.e. on }[t_0,\sigma)\times \mathcal{V}.
\end{equation}
We now show that \eqref{eq:claimblowup} holds. First, note that $(u|_{[t_0, \sigma)\times \mathcal{V}}, \ind_{\mathcal{V}}\sigma+\ind_{\Omega\setminus\mathcal{V}}t_0)$ is a $(p_0, q_0,\kappa_0,s_0)$-solution to \eqref{eq:v}. Since $(v,\tau)$ is a maximal solution (see \cite[Definition 4.6(3)]{AV25_survey}), we have
\begin{equation}\label{eq:max_v}
    \sigma\leq \tau \text{ a.s. on }\mathcal{V}\quad \text{ and }\quad  u=v \text{ a.e. on }[t_0,\sigma)\times \mathcal{V}.
\end{equation}
Therefore, it remains to show that $\mathbf{P}(\mathcal{V}\cap \{\sigma< \tau\})=0$. To this end, note that \eqref{eq:instan_v} and \eqref{eq:max_v} yield $u=v \in L^p_\loc((t_0, \sigma]; \Hs^{\tilde{s},q}(\T^d; \RR^d))$ a.s. on $\mathcal{V}\cap \{\sigma<\tau\}$. In addition, combining this with \eqref{eq:u_instan} shows that $u\in L^p(0, \sigma; \Hs^{\tilde{s},q}(\T^d;\RRd))$ a.s. on $\mathcal{V}\cap \{\sigma<\tau\}$. Similarly, we obtain that $\sup_{t\in [0, \sigma)}\|u(t)\|_{\BB^{\frac{d}{q}+1-2\gam}_{q,p}(\T^d;\RRd)}<\infty$ a.s. on $\mathcal{V}\cap \{\sigma<\tau\}$. Therefore, using \eqref{eq:para_indep1} we obtain
\begin{align*}
    \mathbf{P}&(\mathcal{V}\cap \{\sigma<\tau\})\\&= \mathbf{P}\Big(\mathcal{V}\cap \{\sigma<\tau\}\cap \big\{\sup_{t\in [0, \sigma)}\|u(t)\|_{\BB^{\frac{d}{q}+1-2\gam}_{q,p}(\T^d;\RRd)}+ \|u\|_{L^p(0, \sigma; \Hs^{\tilde{s},q}(\T^d;\RRd))}<\infty\big\}\Big)\\
    &\leq \mathbf{P}\Big(\sigma<\infty, \sup_{t\in [0, \sigma)}\|u(t)\|_{\BB^{\frac{d}{q}+1-2\gam}_{q,p}(\T^d;\RRd)}+ \|u\|_{L^p(0, \sigma; \Hs^{\tilde{s},q}(\T^d;\RRd))}<\infty\Big)=0.
\end{align*}
This completes the proof of \ref{it:2}.

\textit{Proof of \ref{it:1}}. Again, let $p_0\in (2,\infty)$, $q_0\in (2,\infty)$ and $s_0\in [\gam, 2\gam)$ be such that the assumptions of Theorem \ref{thm:localWP} hold with these parameters. Let $q_1>q_0$ and take  $\tilde{q}\in (q_0, q_1)$ be such that the assumptions of Theorem \ref{thm:localWP} also hold with $q_0$ replaced by $\tilde{q}$. Let $\kappa_0= \kappa_{c,0}$ as in \eqref{eq:kappa0} and define 
\begin{equation*}
    \kappa_{c,1}:= -1  + \frac{p_0}{2}\big(4-\gam^{-1}\big(\frac{d}{\tilde{q}}+s_0+1\big)\big).
\end{equation*}
In addition, take $\kappa\in (\kappa_{c,0}, \min\{\kappa_{c,1}, \frac{p_0}{2}-1\})$ and let $\beta_1:=-s_0+2\gam(1-\frac{\kappa+1}{p_0})$ be the regularity of the trace space. Then, since $\kappa> \kappa_{c,0}$, we have $\beta_1<\beta_0<\gam$ (see also \eqref{eq:beta0}), so that $\BB^{\beta_1}_{\tilde{q}, p_0}(\T^d;\RR^d)$ is non-critical. Arguing as in the proof of \ref{it:2} and using instantaneous regularisation, take $\lambda\in (\beta_0,\gam)$ and we obtain 
\begin{equation*}
    \ind_{\{\sigma>t_0\}}u(t_0)\in L^0_{\mathcal{F}_{t_0}}(\Omega; \BB^{\beta_1}_{\tilde{q}, p_0}(\T^d; \RR^d)).
\end{equation*}
Hence, by the local well-posedness from Theorem \ref{thm:localWP}, there exists a $(p_0,\tilde{q},\kappa,s_0)$-maximal solution $(v, \tau)$ on $[t_0,\infty)$ to \eqref{eq:v}. Since the setting is non-critical, Theorem \ref{thm:blow-up_crit_noncritical} yields 
\begin{equation*}
    \mathbf{P}\big(\tau < \infty,\sup\limits_{t \in [t_0, \tau)} \|v(t)\|_{{\BB^{\beta_1}_{\tilde{q},p_0}(\T^d;\RR^d)}}<\infty\big) = 0.
\end{equation*}
Since $\beta_1<\beta_0$, we have the embeddings $B^{\beta_0}_{q_1,\infty}\hookrightarrow B^{\beta_0}_{\tilde{q},\infty}\hookrightarrow B^{\beta_1}_{\tilde{q}, p_0}$ and therefore
\begin{equation*}
    \mathbf{P}\big(\tau < \infty,\sup\limits_{t \in [t_0, \tau)} \|v(t)\|_{{\BB^{\beta_0}_{q_1,\infty}(\T^d;\RR^d)}}<\infty\big) = 0.
\end{equation*}
It remains to prove that $\tau=\sigma$ a.s. on $\mathcal{V}:=\{\sigma>t_0\}$ and $u=v$ a.e. on $[t_0, \sigma)\times \mathcal{V}$, see \eqref{eq:claimblowup}. This is proved as in \ref{it:2}.
\end{proof}

\subsection{The vorticity equation}\label{subsec:vorticity}
In this section, we study well-posedness of the vorticity equation
\begin{equation}\label{eq: secondvorticity}
       \left\{
        \begin{aligned}
            &\dd \xi + (-\del)^\gam \xi \dd t  = -(\BS\xi \cdot \grad)\xi \dd t + \sum_{n \geq 1} \left(\alpha_n\xi + (\nabla \alpha_n) \times \BS\xi\right)\dd W^n_t&\text{ for }&t\geq 0,\\
            &\xi|_{t=0}=\xi_0,
        \end{aligned}
        \right.
\end{equation}
as motivated in Section \ref{subsec:roadmap}. The sequence $(\alpha_n)_{n \geq 1}$ is chosen to be any square-summable sequence in $W^{1,\infty}(\T^2;\R)$. Given that the curl of a vector field is always zero-mean, we will henceforth frequently refer to the zero-mean subspaces and denote them with a dot, for example 
\begin{equation*}
    \dot{H}^{s,q}(\T^2;\R)\quad\text{ and }\quad \dot{\Hs}^{s,q}(\T^2;\R^2).
\end{equation*}
In addition, we recall the Biot--Savart operator $\BS$ which acts as inverse to the curl operator, see \cite{bertozzi2002vorticity} for an elaborate introduction. To be precise, $\BS f$ is defined by the unique divergence-free and zero-mean vector field $g:\T^2 \rightarrow \R^2$ such that
$\nabla \times g = f$, or, more explicitly, $\BS f = \nabla^\perp \Delta^{-1}f$. 
Moreover,
\begin{equation*}
    \BS: \dot{H}^{s,p}(\T^2;\R)\to \dot{\Hs}^{s+1,p}(\T^2;\R^2)
\end{equation*}
is a bounded linear operator for any $s \in \R$ and $p\in (1,\infty)$.
 We note that as the operator $\BS$ is regularising, one can simply repeat all of the prior calculations in this paper for the equation \eqref{eq: secondvorticity}, replacing the divergence-free spaces by zero-mean spaces. In particular, we have the following local well-posedness for the vorticity equation.

\begin{theorem}[Local well-posedness of the stochastic hypoviscous Navier--Stokes equations in vorticity form]\label{thm:localWPvorticity}
    Let $\gam\in (\frac 12,1]$, $s\in [\gam, 2\gam)$. Furthermore, assume that $p,q\in (2,\infty)$ satisfy
      \begin{equation*}
        \max\{|s-1|, 3\gam-s-1\}<\frac 2q < 4\gam-s-1\quad \text{ and }\quad \frac{2}{p}+\frac{2}{q\gam}\leq 4-\frac{s+1}{\gam}.
    \end{equation*}
    Moreover, set $\kappa:=-1 + \frac{p}{2}\big(4-\gam^{-1}(\frac{2}{q}+s+1)\big)$. Then for all $\xi_0 \in L^0_{\mc{F}_0}(\Om; \dot{B}_{q,p}^{\frac{2}{q}+1-2\gam}(\T^2;\R))$ there exists a unique maximal solution $(\xi,\Theta)$ to \eqref{eq: secondvorticity} satisfying $\Theta>0$ a.s. and
    \begin{align*}
        \xi&\in H^{\theta,p}_{\loc}([0,\Theta),w_{\kappa}; \dot{H}^{-s+2\gam(1-\theta),q}(\T^2; \R)) \,\,\text{a.s. for all }\theta\in [0,1/2),\\
        \xi&\in C([0, \Theta); \dot{B}_{q,p}^{\frac{2}{q}+1-2\gam}(\T^2; \R))\,\, \text{ a.s.}
    \end{align*}
    The solution $(\xi,\Theta)$ instantaneously regularises in time and space:
    \begin{equation*}
    \xi\in L^r_\loc((0,\Theta); \dot{H}^{\gam,\zeta}(\T^2;\R))\cap C^{\theta -\eps}_\loc((0,\Theta); \dot{H}^{-\gam+2\gam(1-\theta),\zeta}(\T^2;\R)) \quad \text{a.s.},
\end{equation*}
for $\theta \in (0,\half)$, $\eps\in (0,\theta)$,  $r,\zeta\in (2,\infty)$.
\end{theorem}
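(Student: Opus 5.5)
The proof mirrors that of Theorem \ref{thm:localWP}, with zero-mean scalar spaces in place of divergence-free vector spaces. We write \eqref{eq: secondvorticity} in the abstract form \eqref{eq:abstract_eq} with
\begin{equation*}
A\xi=(-\del)^\gam\xi,\qquad F(\xi)=-\grad\cdot(\BS\xi\,\xi),\qquad G(\xi)=\bigl(\alpha_n\xi+(\grad\alpha_n)\times\BS\xi\bigr)_{n\geq1}.
\end{equation*}
Here we used $\grad\cdot\BS\xi=0$ to put the transport term in conservative form. We take $X_0=\dot H^{-s,q}(\T^2;\R)$ and $X_1=\dot H^{-s+2\gam,q}(\T^2;\R)$, so that $X_\beta=\dot H^{-s+2\gam\beta,q}$.

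\emph{Linear part.} $(-\del)^\gam$ is a Fourier multiplier vanishing only at $k=0$, so it leaves the zero-mean subspaces invariant and is invertible on them. Lemma \ref{lem:calculus} restricts to the complemented subspace $\dot H^{-s,q}$, which again gives a bounded $\Hinf$-calculus of angle $0$. By \cite[Theorem 3.14]{AV25_survey} we then have $(A,0)\in\SMR^\bullet_{p,\kappa}$ for all $p\in(2,\infty)$ and $\kappa\in[0,\frac p2-1)$.

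\emph{Nonlinear estimates.} This is the key step: the analogue of Lemma \ref{lem:LWP_estimates} with the same $\beta=\frac1{4\gam}(\frac2q+s+1)$. For $F$ we have
\begin{equation*}
\|F(\xi)-F(\eta)\|_{X_0}\lesssim\|\BS\xi\,\xi-\BS\eta\,\eta\|_{H^{-s+1,q}},
\end{equation*}
and we split $\BS\xi\,\xi-\BS\eta\,\eta=\BS(\xi-\eta)\,\xi+\BS\eta\,(\xi-\eta)$. Each term is a product $fg$ with $f=\BS(\cdot)$ and $g=(\cdot)$. Since $\BS:\dot H^{\theta,q}\to\dot{\Hs}^{\theta+1,q}\hookrightarrow\dot{\Hs}^{\theta,q}$ is bounded, the factor $\BS\xi$ is at least as regular as $\xi$. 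The same computation as in Step 1 of Lemma \ref{lem:LWP_estimates} therefore applies verbatim: Hölder plus Sobolev for $s\geq1$, and Lemma \ref{lem:paraproduct} for $s<1$, each time bounding both factors by $\|\cdot\|_{H^{\theta,q}}$ with $\theta=\frac1q-\frac{s-1}2=-s+2\gam\beta$. This yields the bilinear bound $(\|\xi\|_{X_\beta}+\|\eta\|_{X_\beta})\|\xi-\eta\|_{X_\beta}$.

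For $G$, with $s\geq\gam$ we estimate
\begin{equation*}
\|G(\xi)-G(\eta)\|_{\gamma(\ell^2;X_{1/2})}\lesssim\Big\|\Big(\sum_n\bigl|\alpha_n(\xi-\eta)+\grad\alpha_n\times\BS(\xi-\eta)\bigr|^2\Big)^{1/2}\Big\|_{L^q},
\end{equation*}
which is $\lesssim\|(\alpha_n)\|_{\ell^2(W^{1,\infty})}\bigl(\|\xi-\eta\|_{L^q}+\|\BS(\xi-\eta)\|_{L^q}\bigr)\lesssim\|\xi-\eta\|_{X_\beta}$. Here $\theta\geq0$ and $\BS$ is bounded on $\dot L^q$.

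One point needs checking: $F$ and $G$ must map into zero-mean spaces. For $F$ this holds because it is a divergence. $G$ is linear but in general not mean-free; here we would apply the mean-free projection $\Pi$, consistent with \eqref{global_noise_thm}. This is where the equation's structure has to be matched to the zero-mean framework, and it is the main subtlety. We would resolve it by noting that $\grad\times\P[\Pi(\alpha_n u)]$ is automatically mean-free, so the curl-derived noise coincides with its $\Pi$-projection.

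\emph{Conclusion.} With these estimates, the criticality condition \eqref{eq:LWP_critical} holds with equality for the stated $\kappa$, and the parameter conditions give $\kappa\in[0,\frac p2-1)$ as before. Then \cite[Theorem 4.7]{AV25_survey} yields the maximal solution $(\xi,\Theta)$ with trace space $\dot B^{2/q+1-2\gam}_{q,p}$, exactly as in \eqref{eq:tracespace}. The time regularisation follows from \cite[Theorems 5.6 \& 5.7]{AV25_survey}. The spatial bootstrapping repeats Steps 3 and 4 of the proof of Theorem \ref{thm:localWP}: in the estimates \eqref{eq:claim_estFG} and in Step 4, one factor $u$ is replaced by $\BS\xi$. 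Because $\BS$ gains a derivative, this only improves the embeddings used, and the noise has linear growth via $\BS$ as well. This yields $\xi\in L^r_\loc((0,\Theta);\dot H^{\gam,\zeta})$ together with the stated Hölder regularity.
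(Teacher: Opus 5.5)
Your proposal is correct and follows the paper's approach: the paper simply notes that, since $\BS$ is regularising, the whole local theory (the $\Hinf$-calculus, the nonlinear estimates of Lemma~\ref{lem:LWP_estimates}, \cite[Theorem 4.7]{AV25_survey} and the bootstrapping Steps 3--4) can be repeated on zero-mean spaces, and you have written these steps out explicitly. One small correction: the noise in \eqref{eq: secondvorticity} is always mean-free, because $\alpha_n\xi+(\grad\alpha_n)\times\BS\xi=\grad\times(\alpha_n\BS\xi)$, so no projection $\Pi$ is needed in the vorticity equation and the ``main subtlety'' you flag is not actually an issue.
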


The real work to be done in this section is establishing global well-posedness of the vorticity equation. 

\begin{theorem}[Global well-posedness of the stochastic hypoviscous Navier--Stokes equations in vorticity form]\label{thm:globalWPvorticity}
In addition to the conditions of Theorem \ref{thm:localWPvorticity}, assume that $2/q< 2\gam-1$. Then the maximal solution $(\xi,\Theta)$ of \eqref{eq: secondvorticity} is global.
\end{theorem}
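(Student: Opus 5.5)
We will prove that the maximal solution $(\xi,\Theta)$ is global by combining a blow-up criterion with $L^\zeta$-energy estimates for the vorticity. The first step is the vorticity analogue of Theorem \ref{thm:indep_parameter_blow-up}\ref{it:1}. It is proved exactly as before, with divergence-free spaces replaced by zero-mean spaces and $\BS$ absorbing one derivative. It gives, for $0<t_0<T<\infty$ and $q_1>q$,
\begin{equation*}
\mathbf{P}\Big(t_0<\Theta<T,\ \sup_{t\in[t_0,\Theta)}\|\xi(t)\|_{\dot{B}^{\beta_0}_{q_1,\infty}}<\infty\Big)=0,\qquad \beta_0:=\tfrac{2}{q}+1-2\gam .
\end{equation*}
The assumption $2/q<2\gam-1$ means $\beta_0<0$. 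Hence, for any $\zeta\geq q_1$,
\begin{equation*}
L^\zeta\hookrightarrow L^{q_1}\hookrightarrow B^{0}_{q_1,\infty}\hookrightarrow B^{\beta_0}_{q_1,\infty}.
\end{equation*}
It therefore suffices to show $\sup_{t\in[t_0,\Theta\wedge T)}\|\xi(t)\|_{L^\zeta}<\infty$ a.s. on $\{t_0<\Theta<T\}$ for one fixed even integer $\zeta\ge q_1$. We fix such a $\zeta$.

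Next we localise and set up It\^o's formula. Take a localising sequence $(\Theta_n)$ and stopping times $\tau_{n,k}:=\inf\{t\ge t_0:\|\xi(t)\|_{L^\zeta}\ge k\}\wedge\Theta_n\wedge T$. By Theorem \ref{thm:localWPvorticity} and instantaneous regularisation, on $[t_0,\Theta_n]$ we have $\xi\in L^2(\dot H^{\gam,\zeta})\cap C(L^\zeta)$. This is enough to apply an It\^o formula for $\|\xi\|_{L^\zeta}^\zeta$ (e.g. via the $L^\zeta$-It\^o formula of Krylov type, or Galerkin/mollifier approximation). We obtain
\begin{align*}
\|\xi(t)\|^\zeta_{L^\zeta}=\|\xi(t_0)\|^\zeta_{L^\zeta}&-\zeta\int_{t_0}^t\inner{(-\del)^\gam\xi}{\xi|\xi|^{\zeta-2}}\dd r-\zeta\int_{t_0}^t\inner{(\BS\xi\cdot\grad)\xi}{\xi|\xi|^{\zeta-2}}\dd r\\
&+\tfrac{\zeta(\zeta-1)}{2}\sum_n\int_{t_0}^t\int|\xi|^{\zeta-2}|\alpha_n\xi+\grad\alpha_n\times\BS\xi|^2\dd x\dd r+M_t,
\end{align*}
where $M$ is a local martingale.

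We now estimate the terms one by one.
\begin{itemize}
\item[(i)] Transport term: it vanishes, since $\grad\cdot\BS\xi=0$ gives $\inner{(\BS\xi\cdot\grad)\xi}{\xi|\xi|^{\zeta-2}}=\zeta^{-1}\int\BS\xi\cdot\grad|\xi|^\zeta=0$.
\item[(ii)] Dissipative term: it is nonnegative by the C\'ordoba--C\'ordoba (or Stroock--Varopoulos) inequality $\int|\xi|^{\zeta-2}\xi(-\del)^\gam\xi\ge0$ on $\T^2$, so we may drop it.
\item[(iii)] It\^o correction: by boundedness of $\BS:\dot L^\zeta\to\dot W^{1,\zeta}\hookrightarrow L^\zeta$ and H\"older's inequality, it is bounded by $C\sum_n\|\alpha_n\|^2_{W^{1,\infty}}\|\xi\|^\zeta_{L^\zeta}$.
\item[(iv)] Martingale term: its quadratic variation is bounded by $C\int\|\xi\|_{L^\zeta}^{2\zeta}\dd r$ in the same way.
\end{itemize}
Taking expectations on the event $\{\Theta>t_0\}\cap\{\|\xi(t_0)\|_{L^\zeta}\le m\}$ (which is $\mc F_{t_0}$-measurable) and using Burkholder--Davis--Gundy with Young's inequality, Gronwall's lemma yields
\begin{equation*}
\mathbf{E}\Big[\ind\sup_{t\in[t_0,\tau_{n,k}]}\|\xi(t)\|^\zeta_{L^\zeta}\Big]\le C(T)m^\zeta,
\end{equation*}
uniformly in $n,k$. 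Letting $k,n\to\infty$ gives $\sup_{[t_0,\Theta\wedge T)}\|\xi\|_{L^\zeta}<\infty$ a.s. on that event, and letting $m\to\infty$ covers all of $\{\Theta>t_0\}$. Here $\xi(t_0)\in L^\zeta$ by instantaneous regularisation. By the criterion above, $\mathbf{P}(t_0<\Theta<T)=0$ for all $0<t_0<T$. Since $\Theta>0$ a.s., this gives $\Theta=\infty$ a.s.

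The main obstacle is the rigorous justification of the $L^\zeta$-It\^o formula at the available regularity. The dissipation here is fractional and the solution only lies in $\dot H^{\gam,\zeta}$, so the positivity in (ii) and the cancellation in (i) must survive the approximation. We would first try mollifying in space, $\xi_\eps=\xi*\rho_\eps$. We would apply the finite-dimensional It\^o formula to $\xi_\eps$ and pass to the limit using the regularity of Theorem \ref{thm:localWPvorticity}. The commutator $[(\BS\xi\cdot\grad),\rho_\eps*]$ vanishes in the limit by a DiPerna--Lions type argument, since $\BS\xi\in W^{1,\zeta}$ and $\xi\in L^\zeta$. The positivity of the fractional Laplacian term is preserved because it holds for each smooth $\xi_\eps$ and the terms converge.
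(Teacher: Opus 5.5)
Your proposal is correct and follows essentially the same route as the paper. Both arguments start the solution at a positive time, localise with stopping times and an $\mathcal F_{t_0}$-measurable truncation of the initial size, and apply the $L^\zeta$-It\^o formula. Both then drop the dissipative and transport terms and bound the It\^o correction and the Burkholder--Davis--Gundy term via H\"older and the boundedness of $\BS$. The argument closes with Gr\"onwall, and then with the blow-up criterion of Theorem \ref{thm:indep_parameter_blow-up}\ref{it:1} together with the embedding $L^\zeta\hookrightarrow B^{2/q+1-2\gam}_{q_1,\infty}$, which holds because $2/q<2\gam-1$.

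The differences are only in the localisation. The paper stops on $\sup\|\xi\|_{L^\infty}+\int\|\xi\|^2_{H^{\gam,\eta}}$ and truncates on $\{\|\xi_t\|_{L^\infty}\le k\}$. This explicitly guarantees the pathwise regularity up to the stopping time needed for the It\^o formula. You instead attribute that regularity to the localising sequence $\Theta_n$, which would need to be chosen accordingly.
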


\begin{proof}
Note that we are in the regime of parameters motivated in Section \ref{subsec:roadmap}, whereby $L^\zeta \subseteq B^{2/q+1-2\gam}_{q_1,\infty}$ for any $q < q_1 \leq \zeta$. Thus, the central idea of the proof is the $L^\zeta$-estimate of $\xi$. To obtain such an estimate, we wish to apply the generalised It\^{o} formula from \cite[Proposition A.1]{debussche2016degenerate}, though the process $\xi$ lacks sufficient regularity at time zero. To accommodate this, we will instead begin our solution from an arbitrary positive time $t>0$; this will require the use of localising times as, \textit{a priori}, one could have that $0 < \Theta < t$. To this end, let us fix a deterministic time horizon $T>0$, let $(\theta_j)_{j\geq 1}$ be a localising sequence for the maximal solution $(\xi, \Theta)$, and introduce the first hitting times
    $$\tau_j \coloneqq \theta_j \wedge T \wedge \inf\big\{t' \geq t: \sup_{r\in [t,t']}\norm{\xi_{r}}_{L^{\infty}} + \int_t^{t'}\norm{\xi_{r}}_{H^{\gamma, \eta}}^2\dd r \geq j \big\},$$
    where $\eta$ is a henceforth fixed parameter such that $\eta > \frac{4}{\gamma}$ and $\eta \geq \zeta$. Then $\xi$ is a solution on $[0,\tau_j]$ and from the regularity given in Theorem \ref{thm:localWPvorticity}, the sequence $(\tau_j)_{j\geq 1}$ converges to $\Theta \wedge T$ a.s. (observe that $\eta$ is chosen sufficiently large so that $H^{\frac{\gamma}{2}, \eta}$ embeds into $L^\infty$). Note the usual convention that the infimum of the empty set is infinite. In particular, for every $t < t' \leq T$ and all $j$, we have that
\begin{align*}
    \xi_{t' \wedge \tau_j}  &= \xi_{t \wedge \tau_j} - \int_{t \wedge \tau_j}^{t'\wedge\tau_j}(-\del)^\gam \xi_r\dd r -\int_{t \wedge \tau_j}^{t'\wedge\tau_j}(\BS\xi_r \cdot \grad)\xi_r \dd r\\ &+ \sum_{n \geq 1} \int_{t \wedge \tau_j}^{t'\wedge\tau_j}\left(\alpha_n\xi_r + (\nabla \alpha_n) \times \BS\xi_r\right)\dd W^n_r.
\end{align*}
We would like to use that the first hitting time $\tau_j$ enforces a pathwise estimate on the $\xi$, however the value of $\norm{\xi_t}_{L^{\infty}}$ could in theory be very large (and in particular, larger than $j$) so to guarantee such a bound we shall also introduce the set
$$\Gamma_{k,j} \coloneqq \left\{\norm{\xi_t}_{L^\infty} \leq k \right\} \cap \left\{ t < \tau_j\right\},$$
where $k$ is another constant. We note that this set belongs to $\mathcal{F}_t$ due to the progressive measurability of $\xi$ and the fact that $\tau_j$ is a stopping times. Multiplying by $\ind_{\Gamma_{k,j}}$, using the $\mathcal{F}_t$ measurability to take the indicator function inside the stochastic integral, and then invoking the (bi)linearity of all operators involved to pass the indicator function onto $\xi$, we have the equality
\begin{align*}
    \hat{\xi}_{t' \wedge \tau_j}  &= \hat{\xi}_{t \wedge \tau_j} - \int_{t \wedge \tau_j}^{t'\wedge\tau_j}(-\del)^\gam \hat{\xi}_r\dd r -\int_{t \wedge \tau_j}^{t'\wedge\tau_j}(\BS\hat{\xi}_r \cdot \grad)\hat{\xi}_r \dd r\\ &+ \sum_{n \geq 1} \int_{t \wedge \tau_j}^{t'\wedge\tau_j}\big(\alpha_n\hat{\xi}_r + (\nabla \alpha_n) \times \BS\hat{\xi}_r\big)\dd W^n_r,
\end{align*}
where $\hat{\xi} = \xi\ind_{\Gamma_{k,j}}$. Of course, on the set $\Gamma_{k,j}$ we have that $t \wedge \tau_j = t$. We apply the It\^{o} formula from \cite[Proposition A.1]{debussche2016degenerate} and proceed similarly to \cite[Proposition 5.1]{debussche2016degenerate}. Using the regularity enforced by the stopping time $\tau_j$ and $\Gamma_{k,j}$, and in particular the fact that $\sup_{t \leq r \leq \tau_j}\|\hat{\xi}_r\|_{L^{\infty}} \leq j \vee k$, then we obtain the identity
\begin{align*}
    \|\hat{\xi}_{t' \wedge \tau_j}\|_{L^{\zeta}}^{\zeta}  &= \|\hat{\xi}_{t \wedge \tau_j}\|_{L^{\zeta}}^{\zeta} - \zeta\int_{t \wedge \tau_j}^{t'\wedge\tau_j}\inner{(-\del)^\gam \hat{\xi}_r}{\hat{\xi}_r|\hat{\xi}_r|^{\zeta-2}}\dd r\\ &-\zeta\int_{t \wedge \tau_j}^{t'\wedge\tau_j}\inner{(\BS\hat{\xi}_r \cdot \grad)\hat{\xi}_r}{\hat{\xi}_r\abs{\hat{\xi}_r}^{\zeta-2}} \dd r\\ &+ \frac{\zeta(\zeta-1)}{2} \sum_{n \geq 1} \int_{t \wedge \tau_j}^{t'\wedge\tau_j}\inner{\abs{\alpha_n\hat{\xi}_r + (\nabla \alpha_n) \times \BS\hat{\xi}_r}^2}{\abs{\hat{\xi}_r}^{\zeta-2}}\dd r\\ &+
    \zeta \sum_{n \geq 1} \int_{t \wedge \tau_j}^{t'\wedge\tau_j}\inner{\alpha_n\hat{\xi}_r + (\nabla \alpha_n) \times \BS\hat{\xi}_r}{\hat{\xi}_r\abs{\hat{\xi}_r}^{\zeta-2}}\dd W^n_r,
\end{align*}
where $\langle\cdot, \cdot\rangle$ denotes the distributional pairing specified by the $L^2$ inner product. For more details we refer the reader to \cite[Lemma 8.21]{AV25_survey} and references therein. We now begin to estimate the drift terms, which can be done by taking smooth approximations of $\hat{\xi}_r$. For a smooth $\tilde{\xi}$, it is classical that
$$\inner{(-\del)^\gam \tilde{\xi}}{\tilde{\xi}\abs{\tilde{\xi}}^{\zeta-2}} \geq 0 ,$$
see \cite[Proposition II.3.23 \& Example II.3.26]{engel2000one} and \cite[Lemma 2.1]{Pr17}. Moreover, in the nonlinear term, we have that
\begin{align*}
    \inner{(\BS\tilde{\xi} \cdot \grad)\tilde{\xi}}{\tilde{\xi}\abs{\tilde{\xi}}^{\zeta-2}} = \frac{1}{\zeta}\int_{\T^2}(\BS\tilde{\xi} \cdot \grad)\abs{\tilde{\xi}}^{\zeta} \dd x = -\frac{1}{\zeta}\int_{\T^2}\textnormal{div}(\BS\tilde{\xi})\abs{\tilde{\xi}}^{\zeta} \dd x = 0
\end{align*}
using that $\textnormal{div}(\BS\tilde{\xi}) = 0$. Thus, by taking such smooth approximations as in \cite[Lemma 8.21]{AV25_survey}, we deduce the inequality
\begin{align*}
    \norm{\hat{\xi}_{t' \wedge \tau_j}}_{L^{\zeta}}^{\zeta}  &\leq \norm{\hat{\xi}_{t \wedge \tau_j}}_{L^{\zeta}}^{\zeta} + \frac{\zeta(\zeta-1)}{2} \sum_{n \geq 1} \int_{t \wedge \tau_j}^{t'\wedge\tau_j}\inner{\abs{\alpha_n\hat{\xi}_r + (\nabla \alpha_n) \times \BS\hat{\xi}_r}^2}{\abs{\hat{\xi}_r}^{\zeta-2}}\dd r\\ &+
    \zeta \sum_{n \geq 1} \int_{t \wedge \tau_j}^{t'\wedge\tau_j}\inner{\alpha_n\hat{\xi}_r + (\nabla \alpha_n) \times \BS\hat{\xi}_r}{\hat{\xi}_r\abs{\hat{\xi}_r}^{\zeta-2}}\dd W^n_r.
\end{align*}
For the quadratic variation term, we have a straightforward bound
\begin{align*}
   &\inner{\abs{\alpha_n\hat{\xi}_r + (\nabla \alpha_n) \times \BS\hat{\xi}_r}^2}{\abs{\hat{\xi}_r}^{\zeta-2}}\\ & \qquad \lesssim \norm{\alpha_n}_{W^{1,\infty}}^2 \inner{\abs{\hat{\xi}_r}^2 + \abs{\BS\hat{\xi}_r}^2}{\abs{\hat{\xi}_r}^{\zeta-2}}\\
   & \qquad \lesssim \norm{\alpha_n}_{W^{1,\infty}}^2\Big(\norm{\hat{\xi}_r}_{L^{\zeta}}^{\zeta} + \Big[\int_{\T^2}\abs{\BS\hat{\xi}_r}^{2 \cdot \frac{\zeta}{2}}\dd x \Big]^{\frac{2}{\zeta}}\Big[\int_{\T^2}\abs{\hat{\xi}_r}^{(\zeta-2) \cdot \frac{\zeta}{\zeta - 2}}\dd x  \Big]^{\frac{\zeta-2}{\zeta}} \Big)\\
   & \qquad \lesssim \norm{\alpha_n}_{W^{1,\infty}}^2\left(\norm{\hat{\xi}_r}_{L^{\zeta}}^{\zeta} + \norm{\BS\hat{\xi}_r}_{L^{\zeta}}^{2}\norm{\hat{\xi}_r}_{L^{\zeta}}^{\zeta-2} \right)\\
   & \qquad \lesssim \norm{\alpha_n}_{W^{1,\infty}}^2\norm{\hat{\xi}_r}_{L^{\zeta}}^{\zeta},
\end{align*}
where we have applied H\"{o}lder's inequality. All in all, we have achieved a bound
\begin{align*}
    \norm{\hat{\xi}_{t' \wedge \tau_j}}_{L^{\zeta}}^{\zeta}  &\lesssim \norm{\hat{\xi}_{t \wedge \tau_j}}_{L^{\zeta}}^{\zeta} +  \int_{t \wedge \tau_j}^{t'\wedge\tau_j}\norm{\hat{\xi}_r}_{L^{\zeta}}^{\zeta}\dd r\\ &+
    \zeta \sum_{n \geq 1} \int_{t \wedge \tau_j}^{t'\wedge\tau_j}\inner{\alpha_n\hat{\xi}_r + (\nabla \alpha_n) \times \BS\hat{\xi}_r}{\hat{\xi}_r\abs{\hat{\xi}_r}^{\zeta-2}}\dd W^n_r
\end{align*}
using that $\sum_{n \geq 1}\norm{\alpha_n}_{W^{1,\infty}}^2$ is just a constant. Following this, we will now take the absolute value of the stochastic integral, the supremum in time and then the expectation, immediately applying the Burkholder--Davis--Gundy inequality as well, to achieve that
\begin{equation}\label{chosen approp}
    \begin{aligned}
  \mathbf{E}\Big[\sup_{t' \in [t,s']} \norm{\hat{\xi}_{t' \wedge \tau_j}}_{L^{\zeta}}^{\zeta}\Big]  &\lesssim \mathbf{E}\Big[\norm{\hat{\xi}_{t \wedge \tau_j}}_{L^{\zeta}}^{\zeta}\Big] +  \mathbf{E}\Big[\int_{t \wedge \tau_j}^{s'\wedge\tau_j}\norm{\hat{\xi}_r}_{L^{\zeta}}^{\zeta}\dd r\Big]\\ &+
      \mathbf{E}\Big[\int_{t \wedge \tau_j}^{s'\wedge\tau_j}\sum_{n \geq 1}\inner{\alpha_n\hat{\xi}_r + (\nabla \alpha_n) \times \BS\hat{\xi}_r}{\hat{\xi}_r\abs{\hat{\xi}_r}^{\zeta-2}}^2\dd r\Big]^{\frac{1}{2}}. 
\end{aligned}
\end{equation}
By the same computation as for the quadratic variation term, we have that
$$\sum_{n \geq 1}\inner{\alpha_n\hat{\xi}_r + (\nabla \alpha_n) \times \BS\hat{\xi}_r}{\hat{\xi}_r\abs{\hat{\xi}_r}^{\zeta-2}}^2 \lesssim \norm{\hat{\xi}_r}_{L^{\zeta}}^{2\zeta}$$
and furthermore
\begin{align*}
         c\mathbf{E}\Big[\int_{t \wedge \tau_j}^{s'\wedge\tau_j}\norm{\hat{\xi}_r}_{L^{\zeta}}^{2\zeta}\dd r\Big]^{\frac{1}{2}} &\leq c\mathbf{E}\Big[\sup_{t'\in[t\wedge\tau_j,s'\wedge\tau_j]}\norm{\hat{\xi}_{t'}}_{L^{\zeta}}^{\zeta}\int_{t \wedge \tau_j}^{s'\wedge\tau_j}\norm{\hat{\xi}_r}_{L^{\zeta}}^{\zeta}\dd r\Big]^{\frac{1}{2}}\\
         &= c\mathbf{E}\Big[\sup_{t'\in[t,s']}\norm{\hat{\xi}_{t'\wedge\tau_j}}_{L^{\zeta}}^{\zeta}\int_{t \wedge \tau_j}^{s'\wedge\tau_j}\norm{\hat{\xi}_r}_{L^{\zeta}}^{\zeta}\dd r\Big]^{\frac{1}{2}}\\
         &\leq c\mathbf{E}\Big[\frac{1}{4c^2}\sup_{t'\in[t,s']}\norm{\hat{\xi}_{t'\wedge\tau_j}}_{L^{\zeta}}^{2\zeta} + \frac{4c^2}{3}\Big(\int_{t \wedge \tau_j}^{s'\wedge\tau_j}\norm{\hat{\xi}_r}_{L^{\zeta}}^{\zeta}\dd r\Big)^2\Big]^{\frac{1}{2}}\\
         &\leq c\mathbf{E}\Big[\frac{1}{2c}\sup_{t'\in[t,s']}\norm{\hat{\xi}_{t'\wedge\tau_j}}_{L^{\zeta}}^{\zeta} + \frac{2c}{\sqrt{3}}\Big(\int_{t \wedge \tau_j}^{s'\wedge\tau_j}\norm{\hat{\xi}_r}_{L^{\zeta}}^{\zeta}\dd r\Big)\Big]\\
         &= \frac{1}{2}\mathbf{E}\Big[\sup_{t'\in[t,s']}\norm{\hat{\xi}_{t'\wedge\tau_j}}_{L^{\zeta}}^{\zeta}\Big] + \frac{2c^2}{\sqrt{3}}\mathbf{E}\Big[\int_{t \wedge \tau_j}^{s'\wedge\tau_j}\norm{\hat{\xi}_r}_{L^{\zeta}}^{\zeta}\dd r\Big],
\end{align*}
where $c$ is any constant, chosen appropriately for the inequality \eqref{chosen approp}. Returning to this bound, taking the one half term to the other side, we have simply that
    \begin{align}
 \nonumber   \mathbf{E}\Big[\sup_{t' \in [t,s']} \norm{\hat{\xi}_{t' \wedge \tau_j}}_{L^{\zeta}}^{\zeta}\Big]  \lesssim \mathbf{E}\left[\norm{\hat{\xi}_{t \wedge \tau_j}}_{L^{\zeta}}^{\zeta}\right] +  \mathbf{E}\Big[\int_{t \wedge \tau_j}^{s'\wedge\tau_j}\norm{\hat{\xi}_r}_{L^{\zeta}}^{\zeta}\dd r\Big],
\end{align}
which of course implies the coarser bound
    \begin{align*}
   \mathbf{E}\Big[\sup_{t' \in [t,s']} \norm{\hat{\xi}_{t' \wedge \tau_j}}_{L^{\zeta}}^{\zeta}\Big]  
 &\lesssim \mathbf{E}\Big[\norm{\hat{\xi}_{t \wedge \tau_j}}_{L^{\zeta}}^{\zeta}\Big] +  \mathbf{E}\Big[\int_{t}^{s'}\norm{\hat{\xi}_{r \wedge \tau_j}}_{L^{\zeta}}^{\zeta}\dd r\Big]\\
&\lesssim \mathbf{E}\Big[\norm{\hat{\xi}_{t \wedge \tau_j}}_{L^{\zeta}}^{\zeta}\Big] +  \int_{t}^{s'}\mathbf{E}\Big[\norm{\hat{\xi}_{r \wedge \tau_j}}_{L^{\zeta}}^{\zeta}\Big]\dd r
\\
&\lesssim \mathbf{E}\Big[\norm{\hat{\xi}_{t \wedge \tau_j}}_{L^{\zeta}}^{\zeta}\Big] +  \int_{t}^{s'}\mathbf{E}\Big[\sup_{t' \in [t,r]}\norm{\hat{\xi}_{t' \wedge \tau_j}}_{L^{\zeta}}^{\zeta}\Big]\dd r.
\end{align*}
Therefore, we may apply the classical Gr\"{o}nwall inequality to deduce that
$$\mathbf{E}\Big[\sup_{t' \in [t,T]} \norm{\hat{\xi}_{t' \wedge \tau_j}}_{L^{\zeta}}^{\zeta}\Big]  
 \lesssim \mathbf{E}\Big[\norm{\hat{\xi}_{t \wedge \tau_j}}_{L^{\zeta}}^{\zeta}\Big],$$
where the constant depends on $T$ but is independent of $j$. By applying the monotone convergence theorem, using that $\tau_j$ is monotonically increasing to $\Theta \wedge T$, we deduce that
$$\mathbf{E}\Big[\ind_{\Gamma_k}\sup_{t' \in [t,\Theta \wedge T)} \norm{\xi_{t'}}_{L^{\zeta}}^{\zeta}\Big]  
 \lesssim \mathbf{E}\Big[\ind_{\Gamma_k}\norm{\xi_{t}}_{L^{\zeta}}^{\zeta}\Big],$$
 where $\Gamma_k \coloneqq \{t < \Theta\} \cap \left\{\norm{\xi_t}_{L^\infty} \leq k\right\}$, having used on the right hand side that $t < \Theta$ on the set $\Gamma_k$ and noting that $t < T$. Of course, for every fixed $k$, by definition of $\Gamma_k$, the right hand side is finite. Thus, we have that $\sup_{t' \in [t,\Theta \wedge T)} \norm{\hat{\xi}_{t'}}_{L^{\zeta}} $ is finite a.s. on $\Gamma_k$ for every $k$. However, we also note that from the regularity of $\xi$, the sets $\Gamma_k$ are increasing and converge to the set $\left\{t < \Theta\right\}$. 
Therefore, a.s. on the whole set $\left\{t < \Theta\right\}$, we have that $\sup_{t' \in [t,\Theta \wedge T)} \norm{\xi_{t'}}_{L^{\zeta}} < \infty$. In particular,
$$\mathbf{P}\big(t < \Theta, \sup_{t' \in [t,\Theta \wedge T)} \norm{\xi_{t'}}_{L^{\zeta}} < \infty  \big) = \mathbf{P}\left(t < \Theta\right).$$
However, by the motivating blow-up criterion of Theorem \ref{thm:indep_parameter_blow-up} part \ref{it:1}, combined with the embedding $L^\zeta \subseteq B^{2/q+1-2\gam}_{\zeta,\infty}$, then
$$\mathbf{P}\big(t < \Theta < T, \sup_{t' \in [t,\Theta)} \norm{\xi_{t'}}_{L^{\zeta}} < \infty  \big) = 0.$$
For notational convenience, let us denote the set
$$A_t \coloneqq \big\{t < \Theta, \sup_{t' \in [t,\Theta \wedge T)} \norm{\xi_{t'}}_{L^{\zeta}} < \infty \big\}.$$
Note that
$$\big\{t < \Theta < T, \sup_{t' \in [t,\Theta)} \norm{\xi_{t'}}_{L^{\zeta}} < \infty  \big\} = A_t \cap \left\{ \Theta < T \right\}$$
and
$$\lim_{t \rightarrow 0} \mathbf{P}(A_t) = \lim_{t \rightarrow 0}\mathbf{P}\left(t < \Theta \right) = \mathbf{P}(0 < \Theta) = 1,$$
using monotonicity of the sets $\left\{t < \Theta \right\}$ in $t$. Combining these properties, observe that
\begin{align} \nonumber
    \mathbf{P}\left(\Theta < T\right) &= \mathbf{P}\left(\left\{\Theta < T \right\} \cap A_t \right) + \mathbf{P}\left(\left\{\Theta < T \right\} \cap A_t^{\rm c} \right)\\ &= \mathbf{P}\left(\left\{\Theta < T \right\} \cap A_t^{\rm c} \right)
    \leq \mathbf{P}\left( A_t^{\rm c} \right)
    \longrightarrow 0 \label{identically}
\end{align}
in the limit $t \rightarrow 0$. Therefore, for every $T>0$ we have that $\mathbf{P}\left(\Theta < T \right) = 0$. Hence, $\Theta = \infty$ a.s. and the solution is global.
\end{proof}

\subsection{Global well-posedness}\label{subsec:proofglobal}
Finally, we will transfer the solution theory for the vorticity equation \eqref{eq: secondvorticity} to our original hypoviscous Navier--Stokes equation \eqref{eq:abstract_eq}. As motivated in Section \ref{subsec:roadmap}, we are interested in a noise $g_n$ of the form
$$g_n(x,u) = \alpha_n(x)u.$$
However, as we wish to identify this term through applying the Biot--Savart operator, we must also impose a zero-mean constraint on it. Therefore, we introduce the projection $\Pi$ onto the zero-mean subspace, defined for $f=(f^n)_{n=1}^2$ by
$$(\Pi f)^n = f^n - \int_{\T^2}f^n(x)\dd x.$$
This can equivalently be defined with respect to the $k$-th Fourier coefficient $\widehat{f^n}(k)$ by
\begin{equation*}
\widehat{(\Pi f)^n}(k)
:=
\widehat{f^n}(k),
\qquad
k\in\mathbb{Z}^2\setminus\{0\},
\qquad
\widehat{(\Pi f)^n}(0)
:=
0.
\end{equation*}
Thus, we actually treat the equation \eqref{eq:abstract_eq} in the specific case
\begin{equation} \label{global noise}g_n(x,u):= \Pi[\alpha_n(x)u]\quad \text{ with }(\alpha_n)_{n\geq 1}\in \ell^2(W^{1,\infty}(\T^2)).\end{equation}
It should be noted that the curl operation passes through the projection $\Pi$, i.e.,
$$\nabla \times (\Pi f) = \nabla \times f$$
as
\begin{align*}
    \nabla \times f &= \nabla \times \Big(f - \int_{\T^2}f(x)\dd x \Big) + \nabla \times  \Big(\int_{\T^2}f(x)\dd x\Big)\\& = \nabla \times \Big(f - \int_{\T^2}f(x)\dd x \Big) = \nabla \times \left(\Pi f \right).
\end{align*}
This is of course very similar to the justification that the curl passes through the Helmholtz projection, using the decomposition $f = \P f + \nabla g$ and the fact that $\nabla \times (\nabla g) = 0$. The computation \eqref{explicitly compute} is therefore still valid with our choice of $g_n$ including the projector $\Pi$.\\

We state the following lemma connecting the solution to the vorticity equation \eqref{eq: secondvorticity} and the velocity equation \eqref{eq:abstract_eq}.

\begin{lemma} \label{lemma:vorticity_to_velocity}
    Let $(\xi,\Theta)$ be the unique maximal solution to \eqref{eq: secondvorticity}, specified in Theorem \ref{thm:localWPvorticity}. Then $(\BS\xi, \Theta)$ is a local solution to \eqref{eq:abstract_eq} for the choice of $g_n$ given by \eqref{global noise}. Furthermore let $(u,\sigma)$ denote the unique maximal solution to \eqref{eq:abstract_eq} with the initial condition $u_0 \coloneqq \BS\xi_0$. Then $\Theta \leq \sigma$ a.s. and $u = \BS\xi$ on $[0,\Theta)$. 
\end{lemma}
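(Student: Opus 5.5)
Our plan is to apply $\BS$ to the integral equation satisfied by $\xi$ on $[0,\theta_j]$, for a localising sequence $(\theta_j)$ of $(\xi,\Theta)$, and to identify each resulting term with the corresponding term of \eqref{eq:abstract_eq}. The functional setting is fixed first. Since $\BS:\dot H^{r,q}(\T^2;\R)\to\dot\Hs^{r+1,q}(\T^2;\R^2)$ is bounded for all $r$, the regularity of $\xi$ from Theorem \ref{thm:localWPvorticity}, namely $\xi\in H^{\theta,p}_{\loc}([0,\Theta),w_\kappa;\dot H^{-s+2\gam(1-\theta),q})$ and $\xi\in C([0,\Theta);\dot B^{2/q+1-2\gam}_{q,p})$, transfers to $u^\xi:=\BS\xi$. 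It lands in the same spaces shifted by one derivative, and these embed into the spaces required of an $L^p_\kappa$-solution of \eqref{eq:abstract_eq} with parameters $(p,q,\kappa,s)$. The initial value is $u^\xi(0)=\BS\xi_0=u_0\in\BB^{2/q+1-2\gam}_{q,p}$. Progressive measurability is preserved because $\BS$ is a bounded linear operator.

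Next we identify the terms. The operator $\BS=\nabla^\perp\Delta^{-1}$ is a Fourier multiplier, so it commutes with $(-\Delta)^\gam$, and hence $\BS(-\Delta)^\gam\xi=(-\Delta)^\gam u^\xi$. For the noise we use \eqref{explicitly compute} together with the fact that curl passes through $\Pi$ and $\P$. This gives $\nabla\times\P\Pi[\alpha_n u^\xi]=\alpha_n\xi+(\nabla\alpha_n)\times u^\xi$, using $\nabla\times u^\xi=\xi$. The vector field $\P\Pi[\alpha_n u^\xi]$ is divergence free and zero-mean, and $\BS$ is the inverse of curl on such fields. Therefore $\BS(\alpha_n\xi+(\nabla\alpha_n)\times\BS\xi)=\P\Pi[\alpha_n u^\xi]=\P g_n(\cdot,u^\xi)$.

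For the drift, we verify the 2D identity $\nabla\times[(u\cdot\nabla)u]=(u\cdot\nabla)\xi$ for divergence-free $u$. Since $\P$ preserves the mean of $\nabla\cdot(u\otimes u)$, which equals zero, we obtain $\BS[(u^\xi\cdot\nabla)\xi]=\P[\nabla\cdot(u^\xi\otimes u^\xi)]$. These identities are justified first for smooth functions and then extended by density. The extension uses the boundedness of both sides in the negative-order spaces of Lemma \ref{lem:LWP_estimates}, applied with $u\in X_\beta$ and $\xi\in\dot H^{-s+2\gam\beta-1,q}$. Applying $\BS$ to the equation for $\xi$ then shows that $u^\xi$ solves \eqref{eq:abstract_eq} on $[0,\theta_j]$. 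Exchanging $\BS$ with the Bochner and stochastic integrals is permitted because $\BS$ is bounded linear and the integrands are in the right $\gamma$-radonifying spaces. This proves that $(\BS\xi,\Theta)$ is a local solution, with localising sequence $(\theta_j)$.

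Finally, by maximality and uniqueness of $(u,\sigma)$ in the sense of \cite[Definition 4.6]{AV25_survey}, every local solution with the same initial datum is dominated by the maximal one. This yields $\Theta\le\sigma$ a.s. and $u=\BS\xi$ on $[0,\Theta)$, in the same way as \eqref{eq:max_v} was derived.

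We expect the main obstacle to be the rigorous identification of the nonlinear term at the low regularity near $t=0$, where $\xi$ is only a distribution of negative order. Our first approach is the density argument combined with the bilinear estimates of Lemma \ref{lem:LWP_estimates}. The difficulty is that these estimates are formulated in velocity variables. The argument therefore works with $u^\xi$ throughout and expresses $(\BS\xi\cdot\nabla)\xi$ as $\nabla\times\nabla\cdot(u^\xi\otimes u^\xi)$, a formulation in which the estimates apply directly. A secondary check is that the parameter constraints for the velocity equation (with $d=2$) coincide with those of Theorem \ref{thm:localWPvorticity}, so that $(u,\sigma)$ exists in the same class.
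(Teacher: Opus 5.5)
Your proposal is correct and follows the paper's route. Like the paper, you transfer regularity through the boundedness of $\BS$, apply $\BS$ term by term to the vorticity equation along a localising sequence (using the curl identities and the fact that $\BS$ inverts the curl on divergence-free zero-mean fields), and deduce $\Theta\le\sigma$ and $u=\BS\xi$ from maximality and uniqueness of $(u,\sigma)$; the density argument, the interchange of $\BS$ with the integrals, and the parameter check you add only make explicit what the paper leaves implicit.
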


\begin{proof}
    As $\BS\xi$ has only better regularity than $\xi$, we just need to verify that the identity is satisfied. We recall that for a zero-mean function $f$, $\BS f$ is the unique divergence-free and zero-mean vector field such that $\nabla \times (\BS f) = f$. Using that the equation \eqref{eq: secondvorticity} was obtained by taking the curl of $u$, we have already seen that
    \begin{align*}
        \nabla \times \left[(-\del)^\gam \BS\xi \right] &= (-\del)^\gam \xi,\\
        \nabla \times \left[\P \left((\BS\xi \cdot \grad)\BS\xi \right)\right] &= (\BS\xi \cdot \grad)\xi,\\
        \nabla \times \left[ \P \Pi \left(\alpha_n \BS\xi\right)\right] &= \alpha_n\xi + (\nabla \alpha_n) \times \BS\xi.
    \end{align*}
   Thus applying $\BS$ term by term to the identity satisfied by $\xi$, we see that on any localising sequence for $\Theta$, the identity
   $$\dd \BS\xi + [(-\del)^\gam \BS\xi+\P\left((\BS\xi\cdot\grad) \BS\xi\right)]\dd t  =\sum_{n\geq 1}\P \Pi(\alpha_n\BS\xi)\dd W^n_t$$
holds. Hence, $(\BS\xi, \Theta)$ is a local solution to \eqref{eq:abstract_eq}. The second part of the lemma now simply follows from the definition of the maximal time and uniqueness; maximality ensures that $\Theta \leq \sigma$ a.s., whilst uniqueness implies that $\BS\xi = u$ on $[0,\Theta)$.
\end{proof}

In fact, from the regularising properties of $\BS$, one can see that the solution $(\BS\xi, \Theta)$ has more regularity than that observed in Theorem \ref{thm:localWP}. Of course this has come at the cost of a smoother initial condition, requiring not only $u_0 \in L^0_{\mc{F}_0}(\Om; \BB_{q,p}^{2/q+1-2\gam}(\T^2;\RR^2))$ but also $\nabla \times u_0 \in L^0_{\mc{F}_0}(\Om; \dot{B}_{q,p}^{2/q+1-2\gam}(\T^2;\R))$. With Lemma \ref{lemma:vorticity_to_velocity} in hand, we see that global well-posedness of the solution $(u,\sigma)$ will follow from global well-posedness of $(\xi,\Theta)$.

\begin{proof}[Proof of Theorem \ref{thm:globalWPvelocity}.]
        Due to Theorem \ref{thm:globalWPvorticity}, there exists a global solution $(\xi,\Theta)$ to \eqref{eq: secondvorticity} with initial condition $\xi_0 \coloneqq \nabla \times u_0.$ Since $u_0 = \BS\xi_0$, Lemma \ref{lemma:vorticity_to_velocity} yields $\Theta \leq \sigma$ a.s., hence $\sigma = \infty$ a.s. so the solution $(u,\sigma)$ is global.
\end{proof}

\bibliographystyle{plain}
\bibliography{references}
\end{document}